\newtheorem{theorem}{Theorem}[section]
\newtheorem{proposition}{Proposition}[section]
\newtheorem{lemma}{Lemma}[section]
\newtheorem{corollary}{Corollary}[section]
  \def\widebar{\accentset{{\cc@style\underline{\mskip10mu}}}}
\begin{document}
	
	\title{Predictive Density Estimation under the Wasserstein Loss}
	\author{Takeru MATSUDA$^{\displaystyle 1}$ and William E. STRAWDERMAN$^{\displaystyle 2}$\\ \\
		$^{\displaystyle 1}$Department of Mathematical Informatics\\
		The University of Tokyo\\
		\texttt{\normalsize matsuda@mist.i.u-tokyo.ac.jp}\\
		$^{\displaystyle 2}$Department of Statistics and Biostatistics\\ Rutgers University}
		\date{}
	
	\maketitle

\begin{abstract}
We investigate predictive density estimation under the $L^2$ Wasserstein loss for location families and location-scale families.
We show that plug-in densities form a complete class and that the Bayesian predictive density is given by the plug-in density with the posterior mean of the location and scale parameters. 
We provide Bayesian predictive densities that dominate the best equivariant one in normal models.
\end{abstract}

\section{Introduction}
Suppose that we have $n$ independent observations $x^n=(x_1,\ldots,x_n)$ from a probability distribution $p (x \mid \theta)$ and predict the future observation $y$ from a probability distribution $\tilde{p} (y \mid \theta)$ by using a predictive density $\hat{p} (y \mid x^n)$, where $\theta$ is an unknown parameter.
Let $L(\tilde{p}(y \mid \theta),\hat{p}(y \mid x^n))$ be a loss function that measures the closeness of a predictive density $\hat{p}(y \mid x^n)$ to the true distribution $\tilde{p}(y \mid \theta)$.
Then, the risk function of the predicive density $\hat{p}(y \mid x^n)$ is defined as the expected loss:
\begin{align*}
	R(\theta,\hat{p}) = \int L(\tilde{p} (y \mid \theta), \hat{p}(y \mid x^n)) p (x^n \mid \theta) {\rm d} x^n.
\end{align*}
A predictive density $\hat{p}_*$ is called minimax if it minimizes the maximum risk:
\begin{align*}
\sup_{\theta} R(\theta,\hat{p}_*) = \min_{\hat{p}} \sup_{\theta} R(\theta,\hat{p}).
\end{align*}
A predictive density $\hat{p}_1$ is said to dominate another predictive density $\hat{p}_2$ if 
\begin{align*}
	R(\theta,\hat{p}_1) \leq R(\theta,\hat{p}_2)
\end{align*}
holds for every $\theta$ and also
\begin{align*}
	R(\theta,\hat{p}_1) < R(\theta,\hat{p}_2)
\end{align*}
holds for some $\theta$.
A predictive density $\hat{p}_{\pi}$ is called the Bayesian predictive density with respect to a prior $\pi (\theta)$ if it minimizes the average risk:
\begin{align*}
	\int R(\theta,\hat{p}_{\pi}) \pi (\theta) {\rm d} \theta = \min_{\hat{p}} \int R(\theta,\hat{p}) \pi (\theta) {\rm d} \theta.
\end{align*}

Predictive density estimation under the Kullback-Leibler loss has been well studied.
\cite{Aitchison} showed that the Bayesian predictive density with respect to a prior $\pi (\theta)$ is given by
\begin{align*}
	\hat{p}_{\pi} (y \mid x^n) = \int \tilde{p} (y \mid \theta) \pi (\theta \mid x^n) {\rm d} \theta
= \frac{\int \tilde{p} (y \mid \theta) p (x^n \mid \theta) \pi (\theta) {\rm d} \theta}{\int p (x^n \mid \theta) \pi (\theta) {\rm d} \theta}. \label{bayes_pred}
\end{align*}
For the normal model with known variance, the Bayesian predictive density based on the uniform prior is the best equivariant and also minimax \citep{Liang}.
\cite{Komaki01} showed that the Bayesian predictive density based on the Stein prior dominates that based on the uniform prior in dimension larger than or equal to three.
\cite{George06} generalized this result and proved that Bayesian predictive densities based on superharmonic priors dominate that based on the uniform prior.
\cite{Brown} gave a characterization of admissible predictive densities.
For the normal model with unknown variance, \cite{Kato} found a Bayesian predictive density that dominates the best equivariant one.
For general location-scale families, \cite{Komaki07} showed that the Bayesian predictive densities based on superharmonic priors asymptotically dominate the best equivariant one.

Predictive density estimation has been investigated for other loss functions as well.
\cite{Corcuera} studied the class of alpha divergence losses, which includes the Kullback-Leibler loss, and derived an explicit form of the Bayesian predictive density.
\cite{Kubokawa15} and \cite{Kubokawa17} considered $L^2$ loss and $L^1$ loss, respectively.
These studies showed that improved predictive density estimation depends on the loss function.

In this study, we investigate predictive density estimation when the loss function is defined by the Wasserstein distance, which is a distance function between probability measures based on the metric of the underlying space \citep{Olkin,Villani} and has been widely used in machine learning and computer vision \citep{Peyre}.
Since the Wasserstein distance is derived as the optimal transportation cost between distributions, predictive density estimation under the Wasserstein loss is suitable for resource allocation.
Namely, suppose that we predict the spatial distribution of demand for some resource and then distribute it accordingly.
To reduce the cost of re-transportation, predictive densities with smaller Wasserstein loss are preferable.
Here, we consider $L^2$ Wassestein distance and focus on location families and location-scale families.
For both families, plug-in densities are shown to form a complete class \citep{Berger}.
For location families, we prove that the Bayesian predictive density is given by the plug-in density with the posterior mean of the location parameter. 
For location-scale families, we prove that the Bayesian predictive density is given by the plug-in density with the posterior mean of the location and scale parameters. 
We give Bayesian predictive densities that dominate the best equivariant one in normal models.

This paper is organized as follows.
In Section 2, we present useful properties of the Wasserstein distance and also review improved estimation of location and scale parameters.
In Sections 3 and 4, we investigate predictive density estimation under the $L^2$ Wasserstien loss for location families and location-scale families, respectively.
In Section 5, we give concluding remarks.

\section{Preliminaries}
\subsection{Wasserstein distance}
The Wasserstein distance is a metric between probability distributions.
Let $S$ be a metric space with distance $d$.
The $L^p$ Wasserstein distance between two probability distributions $p_1$ and $p_2$ on $S$ is defined by
\[
	W_p (p_1,p_2) = \inf_{X,Y} \ {\rm E} [ d(X,Y)^p] ^{1/p},
\]
where the infimum is taken over all joint distributions of $(X,Y)$ with marginal distributions of $X$ and $Y$ equal to $p_1$ and $p_2$, respectively.
The Wasserstein distance can be interpreted as the optimal transportation cost from $p_1$ to $p_2$.
See \cite{Villani} for details.

In this study, we focus on the setting $S=\mathbb{R}^d$ and $p=2$.
Thus,
\[
	W_2 (p_1,p_2) = \inf_{X,Y} \ {\rm E} [ \| X-Y \|^2] ^{1/2}.
\]
For finiteness of the $L^2$ Wasserstein distance, we restrict our attention to distributions with finite mean and covariance in the following.

For normal distributions, the $L^2$ Wasserstein distance is obtained in closed form.

\begin{lemma}[\cite{Olkin}]
	The $L^2$ Wasserstein distance between $d$-dimensional normal distributions ${\rm N}_d(\mu_1,\Sigma_1)$ and ${\rm N}_d(\mu_2,\Sigma_2)$ is given by
	\[
	W_2({\rm N}_d(\mu_1,\Sigma_1),{\rm N}_d(\mu_2,\Sigma_2)) = \left( \| \mu_1 - \mu_2 \|^2 + {\rm tr} \left( \Sigma_1 + \Sigma_2 -  2 (\Sigma_1^{1/2} \Sigma_2 \Sigma_1^{1/2})^{1/2} \right) \right)^{1/2}.
	\]
\end{lemma}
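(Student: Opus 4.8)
The plan is to work directly from the coupling definition $W_2(p_1,p_2) = \inf_{X,Y} {\rm E}[\|X-Y\|^2]^{1/2}$ and to exploit the fact that, for the squared objective, only the first two moments of the joint law of $(X,Y)$ matter. First I would write any admissible coupling as $X = \mu_1 + U$ and $Y = \mu_2 + V$, where $U$ and $V$ necessarily have mean zero and covariances $\Sigma_1$ and $\Sigma_2$ (these are forced by the prescribed marginals). Expanding,
\[
{\rm E}\|X-Y\|^2 = \|\mu_1-\mu_2\|^2 + 2(\mu_1-\mu_2)^\top {\rm E}[U-V] + {\rm E}\|U-V\|^2 = \|\mu_1-\mu_2\|^2 + {\rm E}\|U-V\|^2,
\]
since ${\rm E}[U]={\rm E}[V]=0$ for every such coupling. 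Thus the mean separates out cleanly and the problem reduces to minimizing ${\rm E}\|U-V\|^2$ over centered couplings.

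Next I would rewrite the centered objective in terms of the cross-covariance $C = {\rm E}[U V^\top]$, obtaining
\[
{\rm E}\|U-V\|^2 = {\rm tr}\,\Sigma_1 + {\rm tr}\,\Sigma_2 - 2\,{\rm tr}\,C.
\]
This shows the objective depends on the coupling only through $C$, and the set of feasible $C$ is exactly $\{C : \left(\begin{smallmatrix}\Sigma_1 & C \\ C^\top & \Sigma_2\end{smallmatrix}\right)\succeq 0\}$, the cross-covariances permitted by a valid joint second-moment matrix. Since every such feasible $C$ is realized by a jointly Gaussian pair $(U,V)$ with the prescribed marginals, Gaussian couplings are optimal, and the remaining task is the finite-dimensional problem of maximizing ${\rm tr}\,C$ over this positive-semidefinite cone.

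To solve it I would (assuming $\Sigma_1$ positive definite) whiten the constraint: conjugating the block matrix by ${\rm diag}(\Sigma_1^{-1/2},\Sigma_2^{-1/2})$ turns the semidefiniteness condition into $\|M\|_{\rm op}\le 1$ for $M = \Sigma_1^{-1/2} C \Sigma_2^{-1/2}$, while the objective becomes ${\rm tr}\,C = {\rm tr}(\Sigma_2^{1/2}\Sigma_1^{1/2} M)$. By von Neumann's trace inequality, the maximum of ${\rm tr}(A M)$ over $\|M\|_{\rm op}\le 1$ equals the nuclear norm of $A=\Sigma_2^{1/2}\Sigma_1^{1/2}$, namely ${\rm tr}((A^\top A)^{1/2}) = {\rm tr}((\Sigma_1^{1/2}\Sigma_2\Sigma_1^{1/2})^{1/2})$, attained by aligning the singular vectors of $M$ with those of $A$. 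Substituting back gives
\[
\min {\rm E}\|U-V\|^2 = {\rm tr}\left(\Sigma_1 + \Sigma_2 - 2(\Sigma_1^{1/2}\Sigma_2\Sigma_1^{1/2})^{1/2}\right),
\]
and combining with the mean term and taking square roots yields the claim. I expect the main obstacle to be this last optimization: verifying both that the extremal $C$ is feasible and that no coupling can beat it requires the singular-value (von Neumann) argument rather than a naive Cauchy--Schwarz bound. The degenerate case where $\Sigma_1$ is only positive semidefinite would be handled by continuity, replacing $\Sigma_1$ by $\Sigma_1 + \varepsilon I$ and letting $\varepsilon \to 0$.
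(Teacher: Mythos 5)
Your argument is correct, but note that the paper does not prove this lemma at all: it is quoted from \cite{Olkin}, and the paper's own machinery for this computation is Lemma~\ref{lem_gelbrich} (Gelbrich), which gives the right-hand side as a lower bound for arbitrary marginals with the stated moments together with an equality condition that the Gaussian pair satisfies (via the affine transport map $x \mapsto \mu_2 + \tilde{\Sigma}^{-1}(x-\mu_1)$). Your route is a self-contained derivation that in effect reproves the Gelbrich bound in the Gaussian case: the decomposition ${\rm E}\|X-Y\|^2 = \|\mu_1-\mu_2\|^2 + {\rm tr}\,\Sigma_1 + {\rm tr}\,\Sigma_2 - 2\,{\rm tr}\,C$, the observation that for Gaussian marginals the achievable cross-covariances are exactly those $C$ making the block matrix positive semidefinite (with every such $C$ realized by a jointly Gaussian coupling), and the reduction via whitening to maximizing ${\rm tr}(\Sigma_2^{1/2}\Sigma_1^{1/2}M)$ over $\|M\|_{\rm op}\le 1$, solved by von Neumann's trace inequality to give the nuclear norm ${\rm tr}((\Sigma_1^{1/2}\Sigma_2\Sigma_1^{1/2})^{1/2})$, are all sound, and you correctly identify that the extremal $M=QP^{\top}$ is feasible. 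What your approach buys is an explicit optimal coupling and a proof that needs no external transport-theoretic input; what the Gelbrich route buys is the general lower bound for non-Gaussian marginals, which the paper actually needs in Proposition~\ref{prop_locs_complete}. Two small points: in the degenerate case you should perturb both $\Sigma_1$ and $\Sigma_2$ (the whitening uses $\Sigma_2^{-1/2}$ as well), and the continuity argument should be stated for both sides of the identity, i.e.\ continuity of $W_2$ under convergence of the perturbed Gaussians and continuity of the trace formula in $(\Sigma_1,\Sigma_2)$; both are standard.
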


Note that if $\Sigma_1$ and $\Sigma_2$ are commutative ($\Sigma_1 \Sigma_2=\Sigma_2 \Sigma_1$), then
\[
{\rm tr} \left( \Sigma_1 + \Sigma_2 -  2 (\Sigma_1^{1/2} \Sigma_2 \Sigma_1^{1/2})^{1/2} \right) = \| \Sigma_1^{1/2} - \Sigma_2^{1/2} \|_F^2,
\]
where $\| A \|_F = ({\rm tr} (A^{\top} A))^{1/2}$ denotes the Frobenius norm of $A$.

We will also use the following properties of the $L^2$ Wasserstein distance.

\begin{lemma}[\cite{Givens}]\label{lem_givens}
	Let $p_1$ and $p_2$ be probability densities on $\mathbb{R}^d$ with mean $\mu_1$ and $\mu_2$, respectively.
	Then,
	\[
	W_2(p_1,p_2) = \left( \| \mu_1 - \mu_2 \|^2 + W_2(q_1,q_2)^2 \right)^{1/2},
	\]
	where $q_1$ and $q_2$ are probability densities with mean zero defined by $q_1(x)=p_1(x+\mu_1)$ and $q_2(x)=p_2(x+\mu_2)$, respectively.
\end{lemma}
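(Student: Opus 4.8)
The plan is to work directly from the coupling definition $W_2(p_1,p_2)^2 = \inf_{X,Y} {\rm E}[\|X-Y\|^2]$, where the infimum runs over all joint distributions of $(X,Y)$ whose marginals are $p_1$ and $p_2$. The central idea is that translating each marginal by its own mean induces a bijection between couplings of $(p_1,p_2)$ and couplings of the centered densities $(q_1,q_2)$, under which the squared-distance objective splits additively. Concretely, I would fix an arbitrary coupling $(X,Y)$ with $X \sim p_1$ and $Y \sim p_2$, and set $U = X - \mu_1$ and $V = Y - \mu_2$. By the definitions $q_1(x)=p_1(x+\mu_1)$ and $q_2(x)=p_2(x+\mu_2)$, the law of $U$ is $q_1$ and the law of $V$ is $q_2$, so $(U,V)$ is a coupling of $(q_1,q_2)$; conversely every coupling of $(q_1,q_2)$ arises this way via the inverse shift, so $(X,Y)\mapsto(U,V)$ is a bijection between the two families of couplings.

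Next I would expand the objective using $X - Y = (U - V) + (\mu_1 - \mu_2)$, which gives
\[
{\rm E}[\|X-Y\|^2] = {\rm E}[\|U-V\|^2] + 2 (\mu_1-\mu_2)^{\top} {\rm E}[U-V] + \|\mu_1-\mu_2\|^2.
\]
The key point is that the cross term vanishes for \emph{every} coupling: since $q_1$ and $q_2$ have mean zero, we have ${\rm E}[U] = 0$ and ${\rm E}[V] = 0$ regardless of the joint law, whence ${\rm E}[U-V] = 0$. Therefore ${\rm E}[\|X-Y\|^2] = \|\mu_1-\mu_2\|^2 + {\rm E}[\|U-V\|^2]$ holds identically along the bijection.

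Finally, taking the infimum over couplings on both sides, the constant $\|\mu_1-\mu_2\|^2$ factors out of the infimum, and the bijection guarantees that the residual infimum of ${\rm E}[\|U-V\|^2]$ is exactly $W_2(q_1,q_2)^2$. This yields $W_2(p_1,p_2)^2 = \|\mu_1-\mu_2\|^2 + W_2(q_1,q_2)^2$, and taking square roots gives the claim. I do not anticipate a genuine obstacle; the only step requiring care is verifying that the mean-zero property of the centered marginals forces the cross term to vanish \emph{uniformly} over all couplings, since this is precisely what promotes the decomposition from an inequality to an exact identity and lets the two infima correspond.
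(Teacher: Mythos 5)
Your proof is correct and complete: the translation $(X,Y)\mapsto(X-\mu_1,\,Y-\mu_2)$ does biject couplings of $(p_1,p_2)$ with couplings of $(q_1,q_2)$, and the cross term vanishes for every coupling because the centered marginals have mean zero (which exists by hypothesis), so the infima transfer exactly. The paper itself gives no proof of this lemma, citing Givens and Shortt (1984); your argument is the standard one underlying that reference, so there is nothing to reconcile.
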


\begin{lemma}[\cite{Gelbrich}]\label{lem_gelbrich}
	Let $p_1$ and $p_2$ be probability densities on $\mathbb{R}^d$ with mean $\mu_1$ and $\mu_2$ and covariance matrices $\Sigma_1$ and $\Sigma_2$, respectively.
	Then,
\[
	W_2(p_1,p_2) \geq \left( \| \mu_1 - \mu_2 \|^2 + {\rm tr} \left( \Sigma_1 + \Sigma_2 -  2 (\Sigma_1^{1/2} \Sigma_2 \Sigma_1^{1/2})^{1/2} \right) \right)^{1/2}.
\]
The equality holds if
\begin{align*}
	p_2(x-\mu_2) = p_1  ( \tilde{\Sigma} (x-\mu_1)), \label{cond_eq}
\end{align*}
where $\tilde{\Sigma} =( \Sigma_1^{-1/2} (\Sigma_1^{1/2} \Sigma_2 \Sigma_1^{1/2})^{1/2} \Sigma_1^{-1/2})^{-1}$.
\end{lemma}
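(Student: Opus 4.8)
The plan is to first peel off the mean contribution with Lemma~\ref{lem_givens} and then reduce the remaining estimate to a finite-dimensional matrix optimization. By Lemma~\ref{lem_givens}, $W_2(p_1,p_2)^2 = \|\mu_1-\mu_2\|^2 + W_2(q_1,q_2)^2$, where $q_1,q_2$ are the centered densities (mean zero, covariances $\Sigma_1,\Sigma_2$), so it suffices to prove $W_2(q_1,q_2)^2 \geq {\rm tr}(\Sigma_1+\Sigma_2-2(\Sigma_1^{1/2}\Sigma_2\Sigma_1^{1/2})^{1/2})$. For any coupling $(X,Y)$ of $q_1,q_2$ I would expand
\[
{\rm E}[\|X-Y\|^2] = {\rm tr}(\Sigma_1) + {\rm tr}(\Sigma_2) - 2\,{\rm tr}(C), \qquad C := {\rm E}[XY^\top],
\]
so that minimizing the transport cost is equivalent to maximizing ${\rm tr}(C)$ over the cross-covariances $C$ that arise from admissible couplings.

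The key step is to bound ${\rm tr}(C)$ using only $\Sigma_1$ and $\Sigma_2$. Every coupling makes the joint covariance matrix positive semidefinite,
\[
\begin{pmatrix} \Sigma_1 & C \\ C^\top & \Sigma_2 \end{pmatrix} \succeq 0,
\]
which (assuming for now $\Sigma_1,\Sigma_2 \succ 0$, and treating the singular case by a limiting argument) is equivalent by the Schur complement to $C^\top \Sigma_1^{-1} C \preceq \Sigma_2$. Writing $M := \Sigma_1^{-1/2} C \Sigma_2^{-1/2}$, this constraint reads $M^\top M \preceq I$, i.e.\ all singular values of $M$ are at most one, while ${\rm tr}(C) = {\rm tr}(\Sigma_2^{1/2}\Sigma_1^{1/2} M)$. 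By von Neumann's trace inequality (equivalently, the duality between the operator and nuclear norms), $\sup_{\|M\| \le 1} {\rm tr}(AM)$ equals the nuclear norm of $A := \Sigma_2^{1/2}\Sigma_1^{1/2}$, namely the sum of its singular values ${\rm tr}((A^\top A)^{1/2})$. Since $A^\top A = \Sigma_1^{1/2}\Sigma_2\Sigma_1^{1/2}$, this yields ${\rm tr}(C) \leq {\rm tr}((\Sigma_1^{1/2}\Sigma_2\Sigma_1^{1/2})^{1/2})$, and substituting into the expansion above gives the claimed lower bound.

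For the equality assertion I would exhibit an explicit coupling that saturates the bound. Let $T := \tilde{\Sigma}^{-1} = \Sigma_1^{-1/2}(\Sigma_1^{1/2}\Sigma_2\Sigma_1^{1/2})^{1/2}\Sigma_1^{-1/2}$, which is symmetric, and set $Y := \mu_2 + T(X - \mu_1)$ with $X \sim p_1$. A direct computation using the cyclic property of the trace shows ${\rm Cov}(Y) = T\Sigma_1 T = \Sigma_2$ and ${\rm tr}(C) = {\rm tr}(\Sigma_1 T) = {\rm tr}((\Sigma_1^{1/2}\Sigma_2\Sigma_1^{1/2})^{1/2})$, so this coupling attains the inequality with equality. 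The hypothesis $p_2(x-\mu_2) = p_1(\tilde{\Sigma}(x-\mu_1))$ is precisely the statement that the push-forward of $q_1$ under the linear map $T$ has density $q_2$; hence the marginal of $Y$ is exactly $p_2$, making $(X,Y)$ a genuine coupling of $p_1,p_2$ and forcing equality in the Wasserstein bound.

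The main obstacle is the matrix-optimization step: showing that positive semidefiniteness of the joint covariance matrix furnishes the sharp upper bound ${\rm tr}(C) \leq {\rm tr}((\Sigma_1^{1/2}\Sigma_2\Sigma_1^{1/2})^{1/2})$. This rests on von Neumann's trace inequality, and some care is needed for degenerate covariances, where $\Sigma_1^{-1/2}$ must be replaced by a pseudoinverse and the bound recovered by continuity.
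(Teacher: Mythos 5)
The paper does not prove this lemma at all: it is imported verbatim from Gelbrich (1990) (with the Gaussian case from Olkin and Pukelsheim), so there is no in-paper argument to compare against. Your proposal is a correct, self-contained proof, and it is essentially the standard one. The reduction via Lemma~\ref{lem_givens}, the expansion ${\rm E}\|X-Y\|^2={\rm tr}(\Sigma_1)+{\rm tr}(\Sigma_2)-2\,{\rm tr}(C)$, the Schur-complement constraint $C^\top\Sigma_1^{-1}C\preceq\Sigma_2$ relaxing the coupling problem to a matrix problem, and the identification of $\sup_{\|M\|\le 1}{\rm tr}(AM)$ with the nuclear norm ${\rm tr}((A^\top A)^{1/2})$ for $A=\Sigma_2^{1/2}\Sigma_1^{1/2}$ all check out, as do the verifications $T\Sigma_1T=\Sigma_2$ and ${\rm tr}(\Sigma_1T)={\rm tr}((\Sigma_1^{1/2}\Sigma_2\Sigma_1^{1/2})^{1/2})$ for the affine coupling $Y=\mu_2+T(X-\mu_1)$. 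Two small points deserve care if this were written out in full: (i) the degenerate case $\Sigma_1\not\succ 0$, which you flag and which does require either a pseudoinverse formulation or a perturbation-and-continuity argument (continuity of $W_2$ and of $\Sigma\mapsto{\rm tr}((\Sigma_1^{1/2}\Sigma_2\Sigma_1^{1/2})^{1/2})$ both need a word); and (ii) the equality condition as printed in the lemma omits the Jacobian factor $\det(\tilde\Sigma)$ that a genuine push-forward density requires --- your reading of it as ``$q_2$ is the push-forward of $q_1$ under $T=\tilde\Sigma^{-1}$'' is the correct interpretation and is what makes $(X,Y)$ an admissible coupling, so the gap is in the paper's statement, not in your argument.
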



Unlike other dispersion measures between probability distributions such as the alpha divergence and the $L^p$ distance, the Wasserstein distance has the characteristic that it inherits the metric structure of the underlying space.
By exploiting this property, the Wasserstein distance has been widely used in machine learning and computer vision \citep{Peyre}.

\subsection{Shrinkage estimation of location parameter}
We briefly review existing results on Bayes shrinkage estimation of location parameters.
For more details, see \cite{shr_book}.

First, we consider the known scale cases.
Suppose that we have an observation $x \sim {\rm N}_d (\mu,I_d)$ and estimate $\mu$ under the quadratic loss $L(\mu,\hat{\mu}) = \| \hat{\mu}-\mu \|^2$.
For this problem, the usual estimator $\hat{\mu}(x)=x$ is minimax and the (generalized) Bayes estimator with respect to a prior $\pi(\mu)$ is given by the posterior mean:
\[
	\hat{\mu}^{\pi} (x) = \int \mu \pi(\mu \mid x) {\rm d} \mu = \frac{\int \mu p(x \mid \mu) \pi(\mu) {\rm d} \mu}{\int p(x \mid \mu) \pi(\mu) {\rm d} \mu}.
\]
Many minimax generalized Bayes estimators have been developed.
For example, \cite{Stein74} proved that the generalized Bayes estimator with respect to a superharmonic prior is minimax and dominates the usual estimator $\hat{\mu}(x)=x$. 
Analogous results for scale mixtures of normal distributions are given in \cite{Maruyama03} and \cite{Fourdrinier08b}.
Extensions to more general classes of spherically symmetric distributions are given in \cite{Fourdrinier08}.

Next, we consider the unknown scale cases.
Suppose that we have independent observations $x \sim {\rm N}_d \left( \mu, \sigma^2 I_d \right)$ and ${s} \sim \sigma^2 \chi^2_n$.
For the problem of estimating $\mu$ under the scaled quadratic loss $L(\mu,\hat{\mu};\sigma^2) = \| \hat{\mu}-\mu \|^2/\sigma^2$, the usual estimator $\hat{\mu}(x,s)=x$ is minimax and the (generalized) Bayes estimator with respect to a prior $\pi(\mu,\sigma)$ is given by
\[
\hat{\mu}^{\pi} (x,s) = \frac{\int \sigma^{-2} \mu \pi(\mu,\sigma \mid x,s) {\rm d} \mu {\rm d} \sigma}{\int \sigma^{-2} \pi(\mu,\sigma \mid x,s) {\rm d} \mu {\rm d} \sigma} = \frac{\int \sigma^{-2} \mu p(x,s \mid \mu,\sigma) \pi(\mu,\sigma) {\rm d} \mu {\rm d} \sigma}{\int \sigma^{-2} p(x,s \mid \mu,\sigma) \pi(\mu,\sigma) {\rm d} \mu {\rm d} \sigma}.
\]
\cite{Maruyama05} proposed a class of generalized Bayes minimax estimators in this setting.
By putting $a=-1$, $b=0$, $C=D=I_d$, $e=-1$ and $\gamma=1$ in their Theorem 2.3, the following is obtained.

\begin{lemma}[\cite{Maruyama05}]\label{lem_maru}
	Let $\tau=\sigma^{-2}$.
	If $(n-2)(d-4) \geq 8$, then the generalized Bayes estimator of $\mu$ under the scaled quadratic loss with respect to the hierarchical prior
	\[
	\theta \mid \lambda,\tau \sim {\rm N}_d (0,\tau^{-1} \lambda^{-1} (1-\lambda) I_d),
	\]
	\[
	\lambda \sim \lambda^{-1} 1_{[0,1]}, \quad \tau \sim \tau^{-1} 1_{(0,\infty)},
	\]
	 is minimax and dominates the usual estimator $\hat{\mu}(x,s)=x$.
\end{lemma}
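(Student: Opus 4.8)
The plan is to derive this lemma as a direct specialization of Theorem 2.3 of \cite{Maruyama05}, which constructs a broad family of hierarchical priors yielding generalized Bayes minimax estimators of the normal mean under scaled quadratic loss with unknown variance. Their theorem is indexed by constants $a$, $b$, $e$, $\gamma$ and matrices $C$, $D$, and it supplies both the explicit form of the prior and a sufficient condition, phrased in terms of these parameters together with $d$ and $n$, under which the resulting generalized Bayes estimator is minimax and dominates $\hat{\mu}(x,s)=x$. Since the present statement is obtained by fixing $a=-1$, $b=0$, $C=D=I_d$, $e=-1$ and $\gamma=1$, no new estimation-theoretic argument is needed; the task is to carry out this substitution faithfully.

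First I would write out the general hierarchical prior of their Theorem 2.3 and insert the chosen parameter values, checking that the conditional law of $\theta$ given $(\lambda,\tau)$ collapses to $\mathrm{N}_d(0,\tau^{-1}\lambda^{-1}(1-\lambda)I_d)$ and that the mixing densities for $\lambda$ and $\tau$ reduce to $\lambda^{-1}1_{[0,1]}$ and $\tau^{-1}1_{(0,\infty)}$, respectively. With $C=D=I_d$ the covariance structure becomes scalar, and with $b=0$, $e=-1$, $\gamma=1$ the power-law factors in their parametrization should simplify to exactly these marginals. I would also confirm that the generalized Bayes estimator under scaled quadratic loss remains well defined, i.e.\ that the relevant integrals against $\sigma^{-2}$ converge for these choices.

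Next I would specialize their minimaxity condition. Their sufficient inequality involves $a$, $b$, $e$, $\gamma$, $C$, $D$ and the dimensions; substituting the chosen values should collapse it to the single scalar condition $(n-2)(d-4)\geq 8$. The hard part will be the bookkeeping in this last step: matching their general notation to the present instance and verifying that the constant produced by the substitution is precisely $(n-2)(d-4)$ with threshold $8$, rather than some neighboring expression. Once the prior and the condition are seen to reduce correctly, minimaxity and dominance over the usual estimator follow immediately from their conclusion, so the entire content of the proof lies in this translation rather than in any fresh analytic estimate.
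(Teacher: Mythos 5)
Your approach is exactly the paper's: the lemma is obtained by specializing Theorem~2.3 of \cite{Maruyama05} with $a=-1$, $b=0$, $C=D=I_d$, $e=-1$, $\gamma=1$, and the paper gives no further argument beyond this substitution. Your plan to verify that the prior and the minimaxity condition reduce correctly is precisely the bookkeeping the paper leaves implicit, so the proposal matches the intended proof.
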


Later, we will consider estimation of $\mu$ under the unscaled quadratic loss $L(\mu,\hat{\mu}) = \| \hat{\mu}-\mu \|^2$.
For this loss function, the usual estimator $\hat{\mu}(x,s)=x$ is the best equivariant and the (generalized) Bayes estimator with respect to a prior $\pi(\mu,\sigma)$ is given by
\[
\hat{\mu}^{\pi} (x,s) = \int \mu \pi(\mu,\sigma \mid x,s) {\rm d} \mu {\rm d} \sigma = \frac{\int \mu p(x,s \mid \mu,\sigma) \pi(\mu,\sigma) {\rm d} \mu {\rm d} \sigma}{\int p(x,s \mid \mu,\sigma) \pi(\mu,\sigma) {\rm d} \mu {\rm d} \sigma}.
\]
Then, Lemma \ref{lem_maru} is rewritten as follows.

\begin{lemma}\label{lem_maru2}
	Let $\tau=\sigma^{-2}$.
If $(n-2)(d-4) \geq 8$, then the generalized Bayes estimator of $\mu$ under the unscaled quadratic loss with respect to the hierarchical prior
\[
\theta \mid \lambda,\tau \sim {\rm N}_d (0,\tau^{-1} \lambda^{-1} (1-\lambda) I_d),
\]
\[
\lambda \sim \lambda^{-1} 1_{[0,1]}, \quad \tau \sim 1_{(0,\infty)},
\]
dominates the best equivariant estimator $\hat{\mu}(x,s)=x$.
\end{lemma}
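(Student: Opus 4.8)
The plan is to reduce Lemma \ref{lem_maru2} directly to Lemma \ref{lem_maru} by showing that the two generalized Bayes estimators in question are in fact \emph{the same function} of $(x,s)$, after which the domination claim transfers from the scaled to the unscaled loss by a simple scaling of the risk. Write $\pi_1$ for the hierarchical prior of Lemma \ref{lem_maru} and $\pi_2$ for that of Lemma \ref{lem_maru2}. The two priors share the same conditional $\theta\mid\lambda,\tau$ and the same marginal on $\lambda$; they differ only in the marginal on $\tau=\sigma^{-2}$, namely $\pi_1(\tau)\propto\tau^{-1}$ against $\pi_2(\tau)\propto 1$. Hence, as measures on $(\mu,\sigma)$, one has $\pi_2=\tau\,\pi_1=\sigma^{-2}\pi_1$.

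First I would record the two relevant formulas from the preliminaries, writing $p=p(x,s\mid\mu,\sigma)$ for brevity. Under the scaled loss the generalized Bayes estimator carries the extra weight $\sigma^{-2}$ in both integrals, whereas under the unscaled loss it is the plain posterior mean. Substituting $\pi_2=\sigma^{-2}\pi_1$ into the unscaled formula, the factor $\sigma^{-2}$ reappears in numerator and denominator, producing exactly the scaled formula with prior $\pi_1$:
\[
\hat{\mu}^{\pi_2}_{\mathrm{un}}(x,s)=\frac{\int\mu\,p\,\pi_2\,{\rm d}\mu\,{\rm d}\sigma}{\int p\,\pi_2\,{\rm d}\mu\,{\rm d}\sigma}=\frac{\int\sigma^{-2}\mu\,p\,\pi_1\,{\rm d}\mu\,{\rm d}\sigma}{\int\sigma^{-2}p\,\pi_1\,{\rm d}\mu\,{\rm d}\sigma}=\hat{\mu}^{\pi_1}_{\mathrm{sc}}(x,s).
\]
Thus the estimator of Lemma \ref{lem_maru2} is literally the minimax estimator of Lemma \ref{lem_maru}, and no new estimator needs to be analyzed.

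It then remains to convert the domination statement from one loss to the other. For any estimator $\hat{\mu}(x,s)$ the unscaled and scaled risks satisfy
\[
R_{\mathrm{un}}(\mu,\sigma;\hat{\mu})={\rm E}\,\|\hat{\mu}-\mu\|^2=\sigma^2\,{\rm E}\,[\,\|\hat{\mu}-\mu\|^2/\sigma^2\,]=\sigma^2\,R_{\mathrm{sc}}(\mu,\sigma;\hat{\mu}),
\]
since $\sigma^2$ is constant under the expectation over $(x,s)$. Because $\sigma^2>0$, multiplying the inequalities $R_{\mathrm{sc}}(\mu,\sigma;\hat{\mu}^{\pi_1}_{\mathrm{sc}})\le R_{\mathrm{sc}}(\mu,\sigma;x)$ (strict for some $(\mu,\sigma)$), furnished by Lemma \ref{lem_maru}, by the factor $\sigma^2$ preserves both the weak and the strict inequality. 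Since $\hat{\mu}(x,s)=x$ is the best equivariant estimator under the unscaled loss, this shows that $\hat{\mu}^{\pi_2}_{\mathrm{un}}=\hat{\mu}^{\pi_1}_{\mathrm{sc}}$ dominates the best equivariant estimator under the unscaled loss, which is the assertion.

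There is essentially no deep technical obstacle: the content of the lemma is the bookkeeping observation that the $\sigma^{-2}$ weight intrinsic to Bayes estimation under the scaled loss can be absorbed into the prior, so that passing from the scaled to the unscaled loss is equivalent to multiplying the prior density by $\sigma^{-2}$. The one point demanding care is the constant of proportionality relating $\pi_1$ and $\pi_2$: I would verify that their $\tau$-marginals differ by precisely the factor $\tau=\sigma^{-2}$, that their $\theta$- and $\lambda$-parts are identical so that they cancel in the ratio, and that this factor $\tau$ is independent of $\lambda$ and hence survives the marginalization over $\lambda$ unchanged.
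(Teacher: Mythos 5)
Your proposal is correct and is precisely the argument the paper has in mind: the paper offers no explicit proof of Lemma \ref{lem_maru2}, merely asserting that Lemma \ref{lem_maru} "is rewritten" as this statement, and the intended justification is exactly your observation that the $\sigma^{-2}$ loss-weight in the scaled-loss Bayes formula can be absorbed into the prior (turning $\tau\sim\tau^{-1}1_{(0,\infty)}$ into $\tau\sim 1_{(0,\infty)}$), so the two estimators coincide, while the two risks differ pointwise by the positive factor $\sigma^{2}$, which preserves domination.
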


\subsection{Improved estimation of scale parameter under quadratic loss}
Suppose that we have independent observations $x \sim {\rm N}_d \left( \mu, \sigma^2 I_d \right)$ and ${s} \sim \sigma^2 \chi^2_n$ where both $\mu$ and $\sigma^2$ are unknown.
\cite{Stein64} studied estimation of the variance $\sigma^2$ under the scaled quadratic loss $L(\sigma^2,\hat{\sigma}^2) = (\hat{\sigma}^2/\sigma^2-1)^2$ and showed that the estimator $\hat{\sigma}^2(x,s)=\min \{ s/(n+2), (\| x \|^2+s)/(n+d+2) \}$ dominates the best equivariant estimator $\hat{\sigma}_0^2(x,s)=s/(n+2)$.
Note that this improved estimator is not generalized Bayes.
Later, \cite{Brewster}, \cite{Strawderman} and \cite{Maruyama06} developed generalized Bayes minimax estimators that dominate $\hat{\sigma}_0^2(x,s)$.

Here, we consider estimation of the standard deviation $\sigma$ under the unscaled quadratic loss $L(\sigma,\hat{\sigma}) = (\hat{\sigma}-\sigma)^2$.
The best equivariant estimator is given by
\begin{align}
\hat{\sigma}_0(x,s)=c \sqrt{s}, \quad c= \frac{1}{\sqrt{2}} \frac{\Gamma \left( \frac{n+1}{2} \right)}{\Gamma \left( \frac{n+2}{2} \right)}. \label{c_def}
\end{align}
\cite{Kubokawa94} proposed a unified approach to improving on the best equivariant estimators of powers of the scale parameter.
Consider a class of estimators $\hat{\sigma}(x,s) = \phi(w) \sqrt{s}$ where $w=\| x \|^2/s$ and let
\[
\phi_0(w) = \frac{1}{\sqrt{2}} \frac{\Gamma \left( \frac{n+d+1}{2} \right)}{\Gamma \left( \frac{n+d+2}{2} \right)} \frac{\int_0^1 \lambda^{d/2-1} (1+\lambda w)^{-(n+d+1)/2} {\rm d} \lambda}{\int_0^1 \lambda^{d/2-1} (1+\lambda w)^{-(n+d+2)/2} {\rm d} \lambda}.
\]
Then, from Theorem 2.2 of \cite{Kubokawa94}, we obtain the following.

\begin{lemma}\label{lem_kubo}
	Assume the following conditions:
	\begin{itemize}
		\item $\phi(w)$ is non-decreasing.
		
		\item $\phi(w) \to c$ as $w \to \infty$, where $c$ is defined in \eqref{c_def}.
		
		\item $\phi(w) \geq \phi_0(w)$ for every $w \geq 0$.
	\end{itemize}
	Then, the estimator $\hat{\sigma}(x,s) = \phi(w) \sqrt{s}$ dominates $\hat{\sigma}_0(x,s)$ in \eqref{c_def}.
\end{lemma}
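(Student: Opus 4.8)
\emph{Proof proposal.} The plan is to apply the integral-expression-of-the-risk-difference (IERD) method of \cite{Kubokawa94}. Since the best equivariant estimator is the constant-multiplier rule $\phi\equiv c$ and $\phi(w)\to c$ as $w\to\infty$, I would first telescope the pointwise loss difference. Using $\phi(w)-c=-\int_w^\infty\phi'(t)\,{\rm d}t$ and $\phi(w)^2-c^2=-\int_w^\infty 2\phi(t)\phi'(t)\,{\rm d}t$, one gets
\begin{align*}
(\phi(w)\sqrt{s}-\sigma)^2-(c\sqrt{s}-\sigma)^2 &= (\phi(w)^2-c^2)s-2(\phi(w)-c)\sqrt{s}\,\sigma \\
&= 2\int_w^\infty \phi'(t)\left(\sqrt{s}\,\sigma-s\,\phi(t)\right){\rm d}t,
\end{align*}
where $w=\|x\|^2/s$. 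Taking expectations over $(x,s)$ expresses the risk difference as a single integral in $t$.

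Next I would apply Fubini's theorem, noting that $w\le t$ is equivalent to $\|x\|^2\le ts$, to obtain
\begin{align*}
R(\sigma,\hat{\sigma})-R(\sigma,\hat{\sigma}_0) = 2\int_0^\infty \phi'(t)\,{\rm E}\!\left[\left(\sqrt{s}\,\sigma-s\,\phi(t)\right)\mathbf{1}(w\le t)\right]{\rm d}t.
\end{align*}
Since $\phi$ is non-decreasing, $\phi'(t)\ge 0$, so it suffices that the inner expectation be non-positive for every $t\ge 0$, i.e.
\begin{align*}
\phi(t)\ \ge\ \psi(t;\delta):=\frac{\sigma\,{\rm E}[\sqrt{s}\,\mathbf{1}(w\le t)]}{{\rm E}[s\,\mathbf{1}(w\le t)]},\qquad \delta=\|\mu\|^2/\sigma^2.
\end{align*}
Rescaling $x\mapsto x/\sigma$, $s\mapsto s/\sigma^2$ shows $\psi(t;\delta)$ is free of $\sigma$ and depends on $\mu$ only through $\delta$. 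Evaluating at $\delta=0$, where $\|x\|^2/\sigma^2\sim\chi^2_d$ is independent of $s/\sigma^2\sim\chi^2_n$, and substituting $\|x\|^2=\lambda t\, s$ reduces the two expectations, after a Gamma integral in $s$, to exactly the ratio defining $\phi_0(t)$; thus $\psi(t;0)=\phi_0(t)$.

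The remaining and, I expect, hardest step is to pass from the hypothesis $\phi\ge\phi_0$ to $\phi(t)\ge\psi(t;\delta)$ for all $\delta\ge 0$, i.e.\ to prove the monotonicity $\psi(t;\delta)\le\psi(t;0)$. Using independence of $x$ and $s$, with $\sigma=1$ I can write $\psi(t;\delta)={\rm E}_{\nu_\delta}[s^{-1/2}]$, where $\nu_\delta$ is the probability measure proportional to $s\,G_\delta(ts)\,f_n(s)\,{\rm d}s$, $f_n$ is the $\chi^2_n$ density, and $G_\delta$ is the $\chi^2_d(\delta)$ distribution function. Because $G_{\delta_2}(ts)/G_{\delta_1}(ts)$ is non-decreasing in $s$ for $\delta_2>\delta_1$ (a monotone-likelihood-ratio property of the noncentral $\chi^2$ family), the measures $\nu_\delta$ are stochastically increasing in $\delta$, and since $s^{-1/2}$ is decreasing, $\psi(t;\delta)$ is non-increasing in $\delta$. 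Hence $\psi(t;\delta)\le\psi(t;0)=\phi_0(t)\le\phi(t)$, every integrand above is non-positive, and $R(\sigma,\hat{\sigma})\le R(\sigma,\hat{\sigma}_0)$. Strict inequality on a set of positive measure, hence genuine domination, then follows whenever $\phi\not\equiv c$, since in that case $\phi'>0$ and $\phi>\phi_0$ on an interval; I would also check the consistency condition $\phi_0(t)\to c$ as $t\to\infty$ that underlies the telescoping.
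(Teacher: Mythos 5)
The paper offers no proof of this lemma; it is imported directly as a specialization of Theorem 2.2 of Kubokawa (1994). Your proposal is a correct reconstruction of exactly that IERD argument --- the telescoping of the loss difference, the Fubini step, the identification $\psi(t;0)=\phi_0(t)$ via the $\lambda$-substitution and Gamma integral, and the reduction of the worst case to $\delta=0$ through the monotone-likelihood-ratio property of the noncentral $\chi^2$ CDF ratio --- so it matches the cited source's approach rather than departing from it (the only cosmetic caveat being that $\phi$ should be taken absolutely continuous for the identity $\phi(w)-c=-\int_w^\infty\phi'(t)\,{\rm d}t$ to be legitimate).
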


In fact, the lower bound $\phi_0(w)$ in Lemma \ref{lem_kubo} corresponds to a generalized Bayes estimator.

\begin{proposition}\label{prop_maru}
	Let $\tau=\sigma^{-2}$.
The estimator $\hat{\sigma}(x,s)=\phi_0(w) \sqrt{s}$ is the generalized Bayes estimator of $\sigma$ under the unscaled quadratic loss with respect to the following hierarchical prior:
\[
\theta \mid \lambda,\tau \sim {\rm N}_d (0,\tau^{-1} \lambda^{-1} (1-\lambda) I_d),
\]
\[
\lambda \sim \lambda^{-1} 1_{[0,1]}, \quad \tau \sim 1_{(0,\infty)}.
\]
\end{proposition}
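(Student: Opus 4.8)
The plan is to compute the posterior mean of $\sigma=\tau^{-1/2}$ directly, since under the unscaled quadratic loss $L(\sigma,\hat{\sigma})=(\hat{\sigma}-\sigma)^2$ the generalized Bayes estimator coincides with the posterior mean, $\hat{\sigma}^{\pi}(x,s)={\rm E}[\tau^{-1/2}\mid x,s]$. Writing the likelihood as $p(x\mid\theta,\tau)\propto\tau^{d/2}\exp(-\tau\|x-\theta\|^2/2)$ and $p(s\mid\tau)\propto\tau^{n/2}\exp(-\tau s/2)$, and the prior as $\pi(\theta,\lambda,\tau)\propto p(\theta\mid\lambda,\tau)\,\lambda^{-1}$ with $\theta\mid\lambda,\tau\sim{\rm N}_d(0,\tau^{-1}\lambda^{-1}(1-\lambda)I_d)$, I would form the numerator $N=\int\tau^{-1/2}\,p(x,s\mid\theta,\tau)\,\pi(\theta,\lambda,\tau)$ and denominator $D=\int p(x,s\mid\theta,\tau)\,\pi(\theta,\lambda,\tau)$, each integrated over $(\theta,\lambda,\tau)$, and show that $N/D=\phi_0(w)\sqrt{s}$.

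First I would integrate out $\theta$. The product of the Gaussian likelihood in $x$ and the Gaussian prior on $\theta$ marginalizes as a convolution of independent normals, giving
\[
\int p(x\mid\theta,\tau)\,p(\theta\mid\lambda,\tau)\,{\rm d}\theta={\rm N}_d\bigl(x;0,(\tau^{-1}+\tau^{-1}\lambda^{-1}(1-\lambda))I_d\bigr).
\]
The key simplification is that the combined variance collapses, $\tau^{-1}+\tau^{-1}\lambda^{-1}(1-\lambda)=\tau^{-1}\lambda^{-1}$, so the $x$-marginal is simply ${\rm N}_d(0,\tau^{-1}\lambda^{-1}I_d)$. Up to factors free of $(\lambda,\tau)$, the integrand then reduces to $\tau^{(n+d)/2}\lambda^{d/2-1}\exp(-\tfrac{\tau}{2}(\lambda\|x\|^2+s))$, where the power $\lambda^{d/2-1}$ arises from the $\lambda^{d/2}$ of the marginal normalization times the $\lambda^{-1}$ of the prior.

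Next I would integrate out $\tau$ using the Gamma identity $\int_0^\infty\tau^{a-1}e^{-b\tau}\,{\rm d}\tau=\Gamma(a)b^{-a}$. The extra factor $\tau^{-1/2}$ in $N$ lowers the exponent by $1/2$ relative to $D$, producing $\Gamma((n+d+1)/2)$ in $N$ against $\Gamma((n+d+2)/2)$ in $D$, each multiplied by $((\lambda\|x\|^2+s)/2)$ raised to the matching negative power. Factoring $\lambda\|x\|^2+s=s(1+\lambda w)$ with $w=\|x\|^2/s$ isolates the $s$-powers, whose ratio contributes the factor $\sqrt{s/2}=\sqrt{s}/\sqrt{2}$, and converts the remaining $\lambda$-integrals into exactly $\int_0^1\lambda^{d/2-1}(1+\lambda w)^{-(n+d+1)/2}{\rm d}\lambda$ and $\int_0^1\lambda^{d/2-1}(1+\lambda w)^{-(n+d+2)/2}{\rm d}\lambda$. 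Taking $N/D$ then yields precisely $\phi_0(w)\sqrt{s}$.

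The computation is essentially mechanical, so there is no deep obstacle; the one step that makes everything fall into place is the variance collapse $\tau^{-1}\lambda^{-1}(1-\lambda)+\tau^{-1}=\tau^{-1}\lambda^{-1}$ after marginalizing $\theta$, which is what reduces the two nested Gaussian integrals to the single $\lambda$-integral appearing in $\phi_0$. The only care required is in the bookkeeping: tracking the Gamma-function arguments and the power of $s$ so that the constant $1/\sqrt{2}$, the Gamma ratio, and the trailing $\sqrt{s}$ all emerge in the correct places.
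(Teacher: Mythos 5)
Your proposal is correct and follows essentially the same route as the paper: integrate out $\theta$ (you via the normal convolution identity, the paper via completing the square, both yielding the collapse to $\tau^{-1}\lambda^{-1}$ and the integrand $\tau^{(n+d)/2}\lambda^{d/2-1}\exp(-\tau(\lambda\|x\|^2+s)/2)$), then take the ratio of the $\tau$-integrals with and without the extra $\tau^{-1/2}$ to produce the Gamma ratio, the $\sqrt{s/2}$ factor, and the two $\lambda$-integrals defining $\phi_0(w)$. No gaps.
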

\begin{proof}
The joint distribution of $(\tau,x,s)$ is
\begin{align*}
	& g (\tau,x,s) \\
	\propto & \int \int_0^1 \tau^{d/2} \exp \left( -\frac{\tau}{2} \| x-\theta \|^2 \right) \left( \frac{\tau \lambda}{1-\lambda} \right)^{d/2} \exp \left( -\frac{\tau \lambda}{2 (1-\lambda)} \| \theta \|^2  \right) \tau^{n/2} \exp \left( -\frac{\tau s}{2} \right) \lambda^{-1} {\rm d} \lambda {\rm d} \theta \\
	\propto & \tau^{(n+d)/2} \int_0^1 \left( \frac{\tau \lambda}{1-\lambda} \right)^{d/2} \exp \left( -\frac{\tau s}{2} \right) \lambda^{-1} \int \exp \left( -\frac{\tau}{2 (1-\lambda)} \| \theta-(1-\lambda)x \|^2 -\tau \frac{\| x\|^2 \lambda}{2} \right) {\rm d} \theta {\rm d} \lambda \\
	\propto & \tau^{(d+n)/2} \int_0^1 \lambda^{d/2-1} \exp \left( -\tau \frac{\| x \|^2 \lambda + s}{2} \right) {\rm d} \lambda.
\end{align*}
Therefore, the generalized Bayes estimator of $\sigma=\tau^{-1/2}$ under the unscaled quadratic loss is
\begin{align*}
	\hat{\sigma}^{\pi} (x,s) &= \frac{\int_0^{\infty} \tau^{-1/2} g(\tau,x,s) {\rm d} \tau}{\int_0^{\infty} g(\tau,x,s) {\rm d} \tau} \\
	&= \sqrt{\frac{s}{2}} \frac{\Gamma \left( \frac{n+d+1}{2} \right)}{\Gamma \left( \frac{n+d+2}{2} \right)} \frac{\int_0^1 \lambda^{d/2-1} (1+\lambda w)^{-(n+d+1)/2} {\rm d} \lambda}{\int_0^1 \lambda^{d/2-1} (1+\lambda w)^{-(n+d+2)/2} {\rm d} \lambda} \\
	&= \phi_0(w) \sqrt{s}.
\end{align*}
\end{proof}


\section{Location family}
Let
\[
	p(z \mid \mu) = f(z-\mu)
\]
be a location family with finite second moments, where $z \in \mathbb{R}^d$, $f(z) \geq 0$ and
\[
	\int f(z) {\rm d} z = 1, \quad \int z f(z) {\rm d} z = 0. 
\]
We consider prediction of $y \sim p(y \mid \mu)$ based on the observation $x \sim p (x \mid \mu)$ under the $L^2$ Wasserstein loss.

For this problem, the class of plug-in densities form a complete class \citep{Berger} as follows.
Note that it is sufficient to restrict our attention to predictive densities with finite mean, since the $L^2$ Wasserstein loss would diverge otherwise.

\begin{proposition}\label{prop_loc_complete}
Any predictive density $\hat{p}(y \mid x)$ with mean $\hat{\mu}(x)$ is dominated by the plug-in density $p(y \mid \hat{\mu} (x))$.
\end{proposition}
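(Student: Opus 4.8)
The plan is to reduce the comparison of risks to a pointwise comparison of losses, using the decomposition of the $L^2$ Wasserstein distance in Lemma~\ref{lem_givens}. The key observation is that both the true density $p(y\mid\mu)=f(y-\mu)$ and the plug-in density $p(y\mid\hat\mu(x))=f(y-\hat\mu(x))$ have the \emph{same} centered version, namely $f$ itself, because the normalization $\int z f(z)\,{\rm d}z=0$ makes $\mu$ (respectively $\hat\mu(x)$) the mean.

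First I would fix $x$ and $\mu$ and apply Lemma~\ref{lem_givens} to the pair $(p(\cdot\mid\mu),\hat p(\cdot\mid x))$. Writing $\hat q$ for the mean-zero density obtained by centering $\hat p(\cdot\mid x)$ at $\hat\mu(x)$, the true density centers to $f$, so
\[
W_2\big(p(\cdot\mid\mu),\hat p(\cdot\mid x)\big)^2=\|\mu-\hat\mu(x)\|^2+W_2(f,\hat q)^2.
\]
Applying the same lemma to the plug-in density, whose centered version is again $f$, gives
\[
W_2\big(p(\cdot\mid\mu),p(\cdot\mid\hat\mu(x))\big)^2=\|\mu-\hat\mu(x)\|^2+W_2(f,f)^2=\|\mu-\hat\mu(x)\|^2.
\]
Since $W_2(f,\hat q)^2\ge 0$, the plug-in loss is at most the loss of $\hat p$ at every $(x,\mu)$; because the square root is monotone, the same inequality holds whether the loss is taken to be $W_2$ or $W_2^2$. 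Integrating this pointwise inequality against $p(x\mid\mu)\,{\rm d}x$ then yields $R(\mu,p(\cdot\mid\hat\mu))\le R(\mu,\hat p)$ for every $\mu$.

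For strict domination it remains to exhibit a parameter value at which the inequality is strict. Equality in the pointwise bound forces $W_2(f,\hat q)=0$, i.e.\ $\hat q=f$, which means $\hat p(\cdot\mid x)$ already coincides with the plug-in density $f(\cdot-\hat\mu(x))$. Hence if $\hat p$ is not itself the plug-in density, then $\hat q\ne f$ on a set of $x$ of positive probability under $p(x\mid\mu)$, and the integrated inequality is strict for every $\mu$. I do not expect any serious obstacle here: the only points requiring care are checking that Lemma~\ref{lem_givens} applies (guaranteed by the standing assumption of finite second moments, hence finite mean) and that the centering identity $\int z f(z)\,{\rm d}z=0$ is used correctly to identify the common centered density $f$ shared by the true and plug-in densities.
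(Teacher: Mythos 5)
Your argument is correct and is essentially the paper's own proof: both apply Lemma~\ref{lem_givens} to decompose $W_2(p(\cdot\mid\mu),\hat p(\cdot\mid x))^2$ into $\|\mu-\hat\mu(x)\|^2$ plus a nonnegative remainder, identify $\|\mu-\hat\mu(x)\|^2$ as the loss of the plug-in density, and integrate the pointwise inequality. Your additional remarks on when the domination is strict go slightly beyond what the paper records, but they do not change the substance of the argument.
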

\begin{proof}
	By Lemma \ref{lem_givens}, we obtain
	\begin{align*}
	W_2(p(y \mid \mu),\hat{p}(y \mid x))^2 &= \| \mu-\hat{\mu}(x) \|^2 + W_2(p(y \mid \hat{\mu}(x)),\hat{p}(y \mid x))^2 \\
	& \geq \| \mu-\hat{\mu}(x) \|^2 \\
	&=	W_2(p(y \mid \mu),p(y \mid \hat{\mu}(x)))^2.
\end{align*}
Therefore, the risk of the plug-in density $p(y \mid \hat{\mu} (x))$ is not larger than that of $\hat{p}(y \mid x)$ for every $\mu$.
\end{proof}


Since $W_2(p(y \mid \mu),p(y \mid \hat{\mu}(x)))^2=\| \mu-\hat{\mu}(x) \|^2$, the risk function of the plug-in density $p(y \mid \hat{\mu} (x))$ is equal to the quadratic risk function of the estimator $\hat{\mu}(x)$:
\begin{align*}
R(\mu,\hat{p}) = {\rm E}_{\mu} \| \hat{\mu}(x)-\mu \|^2. \label{risk_eq_loc}
\end{align*}
Thus, predictive density estimation under the $L^2$ Wasserstein loss reduces to point estimation of $\mu$ under the quadratic loss.
Therefore, the best equivariant predictive density and the Bayesian predictive density is obtained as follows.

\begin{theorem}\label{prop_loc}
	The best equivariant predictive density under the $L^2$ Wasserstein loss is given by the plug-in density with the usual estimator $\hat{\mu}(x)=x$ and it is minimax.
\end{theorem}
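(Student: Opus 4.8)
The plan is to exploit the reduction established just above: by Proposition~\ref{prop_loc_complete} the plug-in densities form a complete class, and the risk of the plug-in density $p(y \mid \hat{\mu}(x))$ coincides with the quadratic risk ${\rm E}_{\mu} \| \hat{\mu}(x) - \mu \|^2$ of the estimator $\hat{\mu}$. Thus predictive density estimation under the $L^2$ Wasserstein loss is equivalent to point estimation of $\mu$ under quadratic loss, and it suffices to establish the two classical facts for a location family: that $\hat{\mu}(x) = x$ is the best equivariant estimator and that it is minimax.

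For the equivariance part, I would note that the problem is invariant under the translation group $x \mapsto x + a$, $\mu \mapsto \mu + a$, under which the mean functional is itself equivariant; hence by the completeness above any equivariant predictive density is dominated by the plug-in density attached to an equivariant estimator $\hat{\mu}$ satisfying $\hat{\mu}(x + a) = \hat{\mu}(x) + a$. Setting $a = -x$ gives $\hat{\mu}(x) = x + c$ with $c = \hat{\mu}(0)$, and a direct computation using $\int z f(z)\,{\rm d}z = 0$ yields the constant risk
\[
{\rm E}_{\mu} \| (x - \mu) + c \|^2 = \int \| z \|^2 f(z)\,{\rm d}z + \| c \|^2,
\]
which is minimized uniquely at $c = 0$. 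Therefore the plug-in density with $\hat{\mu}(x) = x$ is the best equivariant predictive density.

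For minimaxity, the usual estimator has constant risk $R_0 = \int \| z \|^2 f(z)\,{\rm d}z$, so its maximum risk equals $R_0$. I would then show that the minimax risk is no smaller than $R_0$ by a limiting Bayes argument: since the Bayes risk of any proper prior is a lower bound for the minimax risk, it is enough to produce a sequence of priors $\pi_k$ (for instance uniform on the cubes $[-k,k]^d$, or the sequence furnished by the Hunt--Stein construction for the amenable group $\mathbb{R}^d$) whose Bayes risks satisfy $r(\pi_k) \to R_0$. Combining $\sup_{\mu} R(\mu, p(\cdot \mid x)) = R_0$ with $R_0 = \lim_k r(\pi_k) \leq \inf_{\hat{p}} \sup_{\mu} R(\mu, \hat{p})$ pins the minimax risk at $R_0$ and shows $\hat{\mu}(x) = x$ is minimax. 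The delicate step, and the main obstacle, is precisely this convergence $r(\pi_k) \to R_0$ for a general location family under only the finite-second-moment hypothesis on $f$; controlling the contribution of parameter values near the boundary of the support of $\pi_k$ requires either an appeal to amenability of the translation group or a careful truncation and dominated-convergence argument, whereas the equivariance computation and the reduction to point estimation are entirely routine.
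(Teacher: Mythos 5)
Your proposal is correct and follows essentially the same route as the paper: the paper gives no explicit proof of this theorem, deriving it directly from the reduction (via Proposition~\ref{prop_loc_complete} and the risk identity) to point estimation of $\mu$ under quadratic loss, for which best equivariance and minimaxity of $\hat{\mu}(x)=x$ are classical. You simply supply those classical details explicitly --- the Pitman-type computation showing equivariant rules have constant risk minimized at $c=0$, and the limiting-Bayes/Hunt--Stein argument for minimaxity --- and you correctly identify the convergence of the Bayes risks as the only nontrivial step.
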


\begin{theorem}\label{th_loc}
The Bayesian predictive density with respect to a prior $\pi(\mu)$ under the $L^2$ Wasserstein loss is given by the plug-in density with the posterior mean $\hat{\mu}^{\pi}(x)$ of $\mu$:
	\[
	\hat{p}_{\pi} (y \mid x) = f(y-\hat{\mu}^{\pi} (x)),
\]
	where
\[
\hat{\mu}^{\pi} (x) = \int \mu \pi(\mu \mid x) {\rm d} \mu = \frac{\int \mu p(x \mid \mu) \pi(\mu) {\rm d} \mu}{\int p(x \mid \mu) \pi(\mu) {\rm d} \mu}.
\]
Its risk function is equal to the quadratic risk function of the Bayes estimator $\hat{\mu}^{\pi}(x)$:
\begin{align*}
	R(\mu,\hat{p}_{\pi}) = {\rm E}_{\mu} \| \hat{\mu}^{\pi}(x)-\mu \|^2. 
\end{align*}
\end{theorem}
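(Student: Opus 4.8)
The plan is to leverage the two facts already established just before the statement: the complete-class property of Proposition~\ref{prop_loc_complete}, and the identity reducing the Wasserstein risk of a plug-in density to the quadratic risk of its mean estimator. Together these collapse the Bayesian prediction problem into a Bayesian point-estimation problem under quadratic loss, whose solution is the classical posterior mean.

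First I would argue that the Bayes-optimal predictive density may be sought among plug-in densities. For any predictive density $\hat{p}(y \mid x)$ with mean $\hat{\mu}(x)$, Proposition~\ref{prop_loc_complete} gives $R(\mu,\hat{p}) \geq R(\mu, p(\cdot \mid \hat{\mu}))$ for every $\mu$; integrating against $\pi(\mu)$ shows that the average risk of $\hat{p}$ is at least that of the plug-in density $p(y \mid \hat{\mu}(x))$. Hence it suffices to minimize the average risk over plug-in densities, equivalently over their mean estimators $\hat{\mu}$.

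Next I would substitute the risk identity $R(\mu, p(\cdot \mid \hat{\mu})) = {\rm E}_\mu \| \hat{\mu}(x) - \mu \|^2$, so that the average risk becomes the Bayes risk of the estimator $\hat{\mu}$ under quadratic loss:
\[
\int R(\mu, p(\cdot \mid \hat{\mu})) \pi(\mu) {\rm d}\mu = \int \int \| \hat{\mu}(x) - \mu \|^2 p(x \mid \mu) \pi(\mu) {\rm d}x\, {\rm d}\mu.
\]
By Fubini's theorem this equals $\int \left( \int \| \hat{\mu}(x) - \mu \|^2 \pi(\mu \mid x) {\rm d}\mu \right) m(x) {\rm d}x$ with marginal $m(x) = \int p(x \mid \mu) \pi(\mu) {\rm d}\mu$, which I would minimize pointwise in $x$: for each fixed $x$ the inner posterior expected squared loss is minimized by the posterior mean $\hat{\mu}^{\pi}(x) = \int \mu \pi(\mu \mid x) {\rm d}\mu$ (expand the square and differentiate, or invoke the bias--variance decomposition). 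This identifies the Bayesian predictive density as $p(y \mid \hat{\mu}^{\pi}(x)) = f(y - \hat{\mu}^{\pi}(x))$, and the accompanying risk formula follows by feeding $\hat{\mu}^{\pi}$ back into the risk identity.

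The argument is largely a chain of already-proved reductions, so I do not expect a serious obstacle; the only point requiring care is the interchange of integration justifying the pointwise minimization, which is licensed by the assumed finiteness of second moments (ensuring the Bayes risk is finite and Fubini applies).
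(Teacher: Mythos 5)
Your proposal is correct and follows essentially the same route as the paper: reduce to plug-in densities via Proposition~\ref{prop_loc_complete}, identify the plug-in risk with the quadratic risk of the mean estimator, and conclude that the posterior mean minimizes the average risk (the paper cites Lehmann and Casella for this last step, where you spell out the Fubini/pointwise-minimization argument). No gaps.
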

\begin{proof}
From Proposition \ref{prop_loc_complete}, it is sufficient to restrict our attention to the class of plug-in densities.
For this class, the risk function is obtained as
\begin{align*}
	R(\mu,\hat{p}) = \int W_2(p(y \mid \mu),\hat{p}(y \mid x))^2 p(x \mid \mu) {\rm d} x &= \int \| \mu-\hat{\mu}(x) \|^2 p(x \mid \mu) {\rm d} x.
\end{align*}
which is equal to the quadratic risk of $\hat{\mu}$ as an estimator of $\mu$.
Therefore, the average risk is minimized by setting $\hat{\mu}$ to the Bayes estimator $\hat{\mu}^{\pi}$ under the quadratic loss, which is given by the posterior mean \citep{Lehmann}.
\end{proof}


\begin{corollary}\label{prop_loc2}
	If the (generalized) Bayes estimator of $\mu$ with respect to a prior $\pi(\mu)$ is minimax under the quadratic loss, then the Bayesian predictive density with respect to the prior $\pi(\mu)$ is also minimax under the $L^2$ Wasserstein loss.
\end{corollary}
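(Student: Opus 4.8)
The plan is to reduce the minimaxity of the predictive problem to the minimaxity of the associated point estimation problem, exploiting the fact that under the $L^2$ Wasserstein loss the two problems share the same risk structure. Write $V = \inf_{\hat{\mu}} \sup_{\mu} {\rm E}_{\mu} \| \hat{\mu}(x) - \mu \|^2$ for the minimax value of estimating $\mu$ under the quadratic loss. The goal is to show that $V$ is also the minimax value of the predictive problem under the $L^2$ Wasserstein loss and that $\hat{p}_{\pi}$ attains it.

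First I would record the value achieved by $\hat{p}_{\pi}$. By Theorem \ref{th_loc}, the Wasserstein risk of the Bayesian predictive density equals the quadratic risk of the Bayes estimator, $R(\mu, \hat{p}_{\pi}) = {\rm E}_{\mu} \| \hat{\mu}^{\pi}(x) - \mu \|^2$. Taking the supremum over $\mu$ and using the hypothesis that $\hat{\mu}^{\pi}$ is minimax under the quadratic loss gives $\sup_{\mu} R(\mu, \hat{p}_{\pi}) = \sup_{\mu} {\rm E}_{\mu} \| \hat{\mu}^{\pi}(x) - \mu \|^2 = V$.

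Next I would establish a matching lower bound valid over all competing predictive densities. For an arbitrary predictive density $\hat{p}(y \mid x)$ with mean $\hat{\mu}(x)$, the inequality derived in the proof of Proposition \ref{prop_loc_complete} (an application of Lemma \ref{lem_givens}) yields $W_2(p(y \mid \mu), \hat{p}(y \mid x))^2 \geq \| \mu - \hat{\mu}(x) \|^2$ pointwise in $x$; integrating against $p(x \mid \mu)$ gives $R(\mu, \hat{p}) \geq {\rm E}_{\mu} \| \hat{\mu}(x) - \mu \|^2$. Hence $\sup_{\mu} R(\mu, \hat{p}) \geq \sup_{\mu} {\rm E}_{\mu} \| \hat{\mu}(x) - \mu \|^2 \geq V$ for every such $\hat{p}$, so no predictive density can have maximum Wasserstein risk below $V$. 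Combining with the previous paragraph, $\hat{p}_{\pi}$ achieves the smallest possible maximum risk $V$ and is therefore minimax.

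The argument is essentially immediate once the risk identity of Theorem \ref{th_loc} and the risk inequality of Proposition \ref{prop_loc_complete} are in place, so there is no real analytic obstacle. The one point deserving care is the lower bound step: it must hold uniformly over the entire class of finite-mean predictive densities, not merely the plug-in ones, which is exactly why the complete-class inequality of Proposition \ref{prop_loc_complete} rather than Theorem \ref{th_loc} alone is needed there. This guarantees that the minimax value of the predictive problem genuinely coincides with $V$, rather than merely being bounded by the value over plug-in densities.
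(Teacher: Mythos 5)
Your argument is correct and is essentially the paper's own (the paper leaves the corollary without an explicit proof precisely because it follows at once from the risk identity in Theorem \ref{th_loc} together with the lower bound over all finite-mean predictive densities supplied by Proposition \ref{prop_loc_complete}, which is exactly the two-step reduction you carry out). Your explicit remark that the lower bound must be taken over the full class of predictive densities, not just plug-ins, is the one point of substance and you handle it correctly.
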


Therefore, Bayesian predictive densities with respect to shrinkage priors are minimax.
For example, from \cite{Stein74}, we obtain the following.

\begin{corollary}\label{cor_loc}
	For the normal model with known variance, the Bayesian predictive density with respect to a superharmonic prior $\pi(\mu)$ is minimax under the $L^2$ Wasserstein loss.
\end{corollary}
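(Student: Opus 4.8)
The plan is to invoke the reduction already established in Corollary \ref{prop_loc2}, which transfers minimaxity from the point-estimation problem to the prediction problem. The essential observation is that, by Proposition \ref{prop_loc_complete}, plug-in densities form a complete class, and by Theorem \ref{th_loc} the risk of the Bayesian predictive density $\hat{p}_{\pi}$ under the $L^2$ Wasserstein loss coincides with the quadratic risk ${\rm E}_{\mu} \| \hat{\mu}^{\pi}(x) - \mu \|^2$ of the associated posterior-mean estimator. Consequently, the minimax value of the Wasserstein prediction problem equals the minimax value of the quadratic estimation problem, and $\hat{p}_{\pi}$ attains the former precisely when $\hat{\mu}^{\pi}$ attains the latter.

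First I would specialize to the normal model with known variance, $x \sim {\rm N}_d(\mu, I_d)$, in which the usual estimator $\hat{\mu}(x) = x$ is minimax under the quadratic loss and underlies the best equivariant predictive density of Theorem \ref{prop_loc}. Next I would invoke Stein's theorem \citep{Stein74}, recalled in Section 2.2, according to which the generalized Bayes estimator with respect to a superharmonic prior $\pi(\mu)$ is itself minimax under the quadratic loss (indeed, it dominates $\hat{\mu}(x)=x$). Feeding this into Corollary \ref{prop_loc2} then yields at once that the corresponding Bayesian predictive density $\hat{p}_{\pi}$ is minimax under the $L^2$ Wasserstein loss.

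Since every ingredient is already in place, there is no substantive obstacle; the argument is a direct transcription of the point-estimation result through the risk identity of Theorem \ref{th_loc}. The only point meriting a remark is that the posterior mean $\hat{\mu}^{\pi}$ associated with the superharmonic prior must be well defined and finite, so that the Wasserstein loss does not diverge and Theorem \ref{th_loc} applies; this is guaranteed by the same integrability conditions under which the generalized Bayes estimator of \citep{Stein74} exists.
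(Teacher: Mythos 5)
Your argument is correct and is exactly the paper's route: the corollary is obtained by combining Corollary \ref{prop_loc2} (minimaxity transfers from the quadratic estimation problem to the Wasserstein prediction problem) with Stein's result that the generalized Bayes estimator under a superharmonic prior is minimax under quadratic loss. The additional remarks about the risk identity of Theorem \ref{th_loc} and the finiteness of the posterior mean are consistent with, and slightly more explicit than, what the paper states.
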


Similarly, improved predictive densities are obtained for scale mixtures of normal distributions by using results in \cite{Maruyama03} and \cite{Fourdrinier08b} and also for spherically symmetric distributions by using results in \cite{Fourdrinier08}.

\section{Location-scale family}
Let
\[
	p(z \mid \mu,\sigma) = \frac{1}{\sigma} f \left( \frac{z-\mu}{\sigma} \right)
\]
be a location-scale family, where $z \in \mathbb{R}^d$, $f(z) \geq 0$ and
\[
	\int f(z) {\rm d} z = 1, \quad \int z f(z) {\rm d} x = 0, \quad \int z z^{\top} f(z) {\rm d} z = I_d.
\]
We consider prediction of $y \sim p(y \mid \mu,\sigma)$ based on the independent observations $x_1,\ldots,x_n \sim p (x \mid \mu,\sigma)$ under the $L^2$ Wasserstein loss.

For this problem, the class of plug-in densities form a complete class \citep{Berger} as follows.
Note that it is sufficient to restrict our attention to predictive densities with finite mean and covariance, since the $L^2$ Wasserstein loss would diverge otherwise.

\begin{proposition}\label{prop_locs_complete}
	Any predictive density $\hat{p}(y \mid x^n)$ with mean $\hat{\mu}(x^n)$ and covariance $\hat{\Sigma}(x^n)$ is dominated by the plug-in density $p(y \mid \hat{\mu} (x^n),\hat{\sigma}(x^n))$ where $\hat{\sigma}(x^n) = d^{-1} {\rm tr} (\hat{\Sigma}(x^n)^{1/2})$.
\end{proposition}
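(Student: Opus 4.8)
The plan is to reduce the claim, exactly as in the proof of Proposition \ref{prop_loc_complete}, to a pointwise comparison of the squared $L^2$ Wasserstein losses. Since the risk of any predictive density is the $(\mu,\sigma)$-expectation over $x^n$ of $W_2(p(y\mid\mu,\sigma),\cdot)^2$, it suffices to show that for every $x^n$ and every $(\mu,\sigma)$,
\[
W_2(p(y\mid\mu,\sigma),p(y\mid\hat\mu,\hat\sigma))^2 \le W_2(p(y\mid\mu,\sigma),\hat p(y\mid x^n))^2,
\]
where $\hat\mu=\hat\mu(x^n)$, $\hat\Sigma=\hat\Sigma(x^n)$ and $\hat\sigma=d^{-1}\mathrm{tr}(\hat\Sigma^{1/2})$. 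I would bound the two sides by, respectively, an explicit coupling (upper bound on the left) and the Gelbrich inequality of Lemma \ref{lem_gelbrich} (lower bound on the right), and then verify the resulting elementary inequality.

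For the left-hand side I would exploit that the true density and the plug-in density share the common shape $f$. Writing $Z\sim f$, the pair $Y_1=\mu+\sigma Z$, $Y_2=\hat\mu+\hat\sigma Z$ is a coupling with the correct marginals $p(\cdot\mid\mu,\sigma)$ and $p(\cdot\mid\hat\mu,\hat\sigma)$, and since $\mathrm{E}[Z]=0$ and $\mathrm{E}[ZZ^{\top}]=I_d$ the cross term vanishes, giving $W_2(p(y\mid\mu,\sigma),p(y\mid\hat\mu,\hat\sigma))^2\le \|\mu-\hat\mu\|^2+d(\sigma-\hat\sigma)^2$. (This is in fact an equality by the commutative case noted after Lemma 2.1, but only the upper bound is needed.) For the right-hand side, the true density has mean $\mu$ and covariance $\sigma^2 I_d$ while $\hat p(\cdot\mid x^n)$ has mean $\hat\mu$ and covariance $\hat\Sigma$; applying Lemma \ref{lem_gelbrich} and using that $\sigma^2 I_d$ is scalar, so that $((\sigma^2 I_d)^{1/2}\hat\Sigma(\sigma^2 I_d)^{1/2})^{1/2}=\sigma\hat\Sigma^{1/2}$, yields $W_2(p(y\mid\mu,\sigma),\hat p(y\mid x^n))^2\ge \|\mu-\hat\mu\|^2+d\sigma^2+\mathrm{tr}(\hat\Sigma)-2\sigma\,\mathrm{tr}(\hat\Sigma^{1/2})$.

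Because the means coincide, the $\|\mu-\hat\mu\|^2$ terms cancel and it remains to check $d(\sigma-\hat\sigma)^2\le d\sigma^2+\mathrm{tr}(\hat\Sigma)-2\sigma\,\mathrm{tr}(\hat\Sigma^{1/2})$ for all $\sigma>0$. Substituting $\hat\sigma=d^{-1}\mathrm{tr}(\hat\Sigma^{1/2})$, the terms linear in $\sigma$ cancel by design and the inequality collapses to $d^{-1}(\mathrm{tr}(\hat\Sigma^{1/2}))^2\le \mathrm{tr}(\hat\Sigma)$. Writing $\lambda_1,\dots,\lambda_d\ge 0$ for the eigenvalues of $\hat\Sigma^{1/2}$, this is precisely $(\sum_i\lambda_i)^2\le d\sum_i\lambda_i^2$, the Cauchy--Schwarz inequality, and it is strict unless $\hat\Sigma$ is a scalar multiple of $I_d$. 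Integrating the pointwise bound over $x^n$ then gives that the risk of the plug-in density $p(y\mid\hat\mu,\hat\sigma)$ is not larger than that of $\hat p$ for every $(\mu,\sigma)$.

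The conceptual content lies entirely in the scale direction. The step I expect to need the most care is the choice $\hat\sigma=d^{-1}\mathrm{tr}(\hat\Sigma^{1/2})$: it is exactly what makes the $\sigma$-linear terms cancel, so that a single plug-in scale dominates simultaneously at every value of $\sigma$, reducing the whole problem to Cauchy--Schwarz; the accompanying simplification of the Gelbrich bound using that $\sigma^2 I_d$ is scalar is the other point to handle cleanly. The mean direction is routine and is dealt with exactly as in the location case of Proposition \ref{prop_loc_complete}, since the plug-in density and $\hat p$ are assigned the same mean $\hat\mu$.
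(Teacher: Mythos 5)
Your proof is correct and follows essentially the same route as the paper: a Gelbrich lower bound (Lemma \ref{lem_gelbrich}) on the loss of $\hat{p}$, reduction to the scalar inequality $d^{-1}({\rm tr}\,\hat{\Sigma}^{1/2})^2 \le {\rm tr}\,\hat{\Sigma}$ via Cauchy--Schwarz, and identification of the plug-in loss as $\| \mu-\hat{\mu} \|^2 + d(\sigma-\hat{\sigma})^2$. The only real difference is that you get this last identity from an explicit coupling (needing only the upper bound), whereas the paper relies on the equality case of Lemma \ref{lem_gelbrich}; your variant is slightly more self-contained but not a different approach.
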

\begin{proof}
By Lemma \ref{lem_gelbrich}, we obtain
\begin{align*}
W_2(p(y \mid \mu,\sigma),\hat{p}(y \mid x^n))^2 &= \| \hat{\mu}(x^n)-\mu \|^2 + \| \hat{\Sigma}(x^n)^{1/2}-\sigma I \|_{{\rm F}}^2 \nonumber \\
& \quad + W_2((\det \hat{\Sigma}(x^n))^{-1/2} f(\hat{\Sigma}(x^n)^{-1/2} (y-\hat{\mu}(x^n))),\hat{p}(y \mid x^n))^2 \\
& \geq  \| \hat{\mu}(x^n)-\mu \|^2 + \| \hat{\Sigma}(x^n)^{1/2}-\sigma I \|_{{\rm F}}^2 \\
& \geq  \| \hat{\mu}(x^n)-\mu \|^2 + \| \hat{\sigma}(x^n)I_d-\sigma I_d \|_{{\rm F}}^2 \\
& = W_2(p(y \mid \mu,\sigma),{p}(y \mid  \hat{\mu} (x^n),\hat{\sigma}(x^n)))^2,
\end{align*}
where we used 
\begin{align*}
\| \hat{\Sigma}(x^n)^{1/2}-\sigma I_d \|_{{\rm F}}^2 &\geq \sum_{k=1}^d ((\hat{\Sigma}(x^n)^{1/2})_{kk} -\sigma)^2 \geq d \left( \frac{1}{d} {\rm tr}(\hat{\Sigma}(x^n)^{1/2}) -\sigma \right)^2  = \| \hat{\sigma}(x^n) I-\sigma I_d \|_{{\rm F}}^2
\end{align*}
in the second inequality.
Therefore, the risk of the plug-in density $p(y \mid \hat{\mu} (x^n),\hat{\sigma} (x^n))$ is not larger than that of $\hat{p}(y \mid x^n)$ for every $(\mu,\sigma)$.
\end{proof}

Since $W_2(p(y \mid \mu,\sigma),{p}(y \mid  \hat{\mu} (x^n),\hat{\sigma}(x^n)))^2 = \| \hat{\mu}(x^n)-\mu \|^2 + d (\hat{\sigma}(x^n)-\sigma)^2$, the risk function of the plug-in density $p(y \mid \hat{\mu} (x^n),\hat{\sigma}(x^n))$ is equal to the sum of the (unscaled) quadratic risk functions of the estimators $\hat{\mu}(x^n)$ and $\hat{\sigma}(x^n)$:
\begin{align*}
R((\mu,\sigma),\hat{p}) = {\rm E}_{\mu,\sigma} \| \hat{\mu}(x^n)-\mu \|^2 + d \cdot {\rm E}_{\mu,\sigma} (\hat{\sigma}(x^n)-\sigma )^2.
\end{align*}
Thus, predictive density estimation under the $L^2$ Wasserstein loss reduces to joint estimation of $\mu$ and $\sigma$ under unscaled quadratic losses.
Therefore, the best equivariant predictive density and the Bayesian predictive density is obtained as follows.

\begin{theorem}\label{prop_locs}
	The best equivariant predictive density under the $L^2$ Wasserstein loss is given by the plug-in density with the best equivariant estimators of $\mu$ and $\sigma$ under the unscaled quadratic losses.
\end{theorem}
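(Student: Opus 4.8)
The plan is to exploit the risk decomposition established just before the statement, namely that for any plug-in density
\[
R((\mu,\sigma),\hat p) = {\rm E}_{\mu,\sigma}\|\hat\mu(x^n)-\mu\|^2 + d\,{\rm E}_{\mu,\sigma}(\hat\sigma(x^n)-\sigma)^2,
\]
and to combine it with the completeness result of Proposition \ref{prop_locs_complete}. The relevant group is the location-scale group $G=\{(a,b):a>0,\ b\in\mathbb{R}^d\}$, acting on the sample space by $x\mapsto ax+b$ and on the parameter by $(\mu,\sigma)\mapsto(a\mu+b,a\sigma)$. First I would check that the prediction problem is invariant under $G$: the family $p(\cdot\mid\mu,\sigma)$ is carried to itself, and since the metric on $\mathbb{R}^d$ is scaled by $a$ under $x\mapsto ax+b$, the squared Wasserstein loss is multiplied by the decision-free factor $a^2$, so that for an equivariant rule $R((\mu,\sigma),\hat p)=\sigma^2 R((0,1),\hat p)$ is constant up to the factor $\sigma^2$. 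Here a predictive density is equivariant when $\hat p(ay+b\mid ax^n+b)\,a^d=\hat p(y\mid x^n)$ for all $(a,b)\in G$.

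Next I would reduce to plug-in densities within the equivariant class. By Proposition \ref{prop_locs_complete}, any equivariant $\hat p$ is dominated by the plug-in density with mean $\hat\mu(x^n)$ and with $\hat\sigma(x^n)=d^{-1}{\rm tr}(\hat\Sigma(x^n)^{1/2})$, where $\hat\Sigma(x^n)$ is the covariance of $\hat p$. The key observation is that this dominating plug-in is itself equivariant: if $\hat p$ is equivariant, its mean satisfies $\hat\mu(ax^n+b)=a\hat\mu(x^n)+b$ and its covariance satisfies $\hat\Sigma(ax^n+b)=a^2\hat\Sigma(x^n)$, whence $\hat\sigma(ax^n+b)=a\hat\sigma(x^n)$. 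Consequently no equivariant predictive density is lost by restricting to plug-in densities, and equivariance of a plug-in density is precisely equivalent to the pair $(\hat\mu,\hat\sigma)$ being a location-scale equivariant estimator of $(\mu,\sigma)$ in the usual sense.

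It then remains to minimize the decomposed risk over equivariant $(\hat\mu,\hat\sigma)$. Because the two summands depend on $\hat\mu$ and $\hat\sigma$ separately, and the equivariance constraints on $\hat\mu$ and on $\hat\sigma$ are likewise separate, the minimization splits: the optimal equivariant plug-in density is obtained by choosing $\hat\mu$ to minimize ${\rm E}_{\mu,\sigma}\|\hat\mu-\mu\|^2$ among location-scale equivariant estimators and, independently, $\hat\sigma$ to minimize ${\rm E}_{\mu,\sigma}(\hat\sigma-\sigma)^2$ among such estimators. These are exactly the best equivariant estimators of $\mu$ and of $\sigma$ under the unscaled quadratic losses, which yields the claim.

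The step I expect to be the main obstacle is the second one: verifying cleanly that the completeness reduction is compatible with equivariance, i.e. that the mean and covariance functionals used in Proposition \ref{prop_locs_complete} transform correctly so that the dominating plug-in stays inside the equivariant class, and hence that passing from all equivariant predictive densities to equivariant plug-in densities does not change the infimum of the risk. One should also confirm, under the stated moment assumptions, that the best equivariant (Pitman-type) estimators of $\mu$ and $\sigma$ exist; the additive separation of the risk is then exactly what reduces the joint minimization to two independent one-parameter problems.
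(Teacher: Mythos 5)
Your proposal is correct and follows essentially the same route the paper takes implicitly: Theorem \ref{prop_locs} is presented there as an immediate consequence of Proposition \ref{prop_locs_complete} together with the risk identity $R((\mu,\sigma),\hat p)={\rm E}_{\mu,\sigma}\|\hat\mu(x^n)-\mu\|^2+d\,{\rm E}_{\mu,\sigma}(\hat\sigma(x^n)-\sigma)^2$, which reduces the problem to separate equivariant estimation of $\mu$ and $\sigma$ under unscaled quadratic loss. Your additional check that the dominating plug-in inherits equivariance (via the transformation of the mean and covariance functionals) is exactly the bookkeeping the paper leaves unstated, and it is right.
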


\begin{theorem}\label{th_locs}
	The Bayesian predictive density with respect to a prior $\pi(\mu,\sigma)$ under the $L^2$ Wasserstein loss is given by the plug-in density with the posterior mean $(\hat{\mu}^{\pi}(x^n),\hat{\sigma}^{\pi}(x^n))$ of $(\mu,\sigma)$:
	\[
\hat{p}_{\pi} (y \mid x^n) = \frac{1}{\hat{\sigma}^{\pi}(x^n)} f \left( \frac{y-\hat{\mu}^{\pi} (x^n)}{\hat{\sigma}^{\pi}(x^n)} \right),
\]
where
\[
\hat{\mu}^{\pi} (x^n) = \int \mu \pi(\mu,\sigma \mid x^n) {\rm d} \mu {\rm d} \sigma = \frac{\int \mu p(x^n \mid \mu,\sigma) \pi(\mu,\sigma) {\rm d} \mu {\rm d} \sigma}{\int p(x^n \mid \mu,\sigma) \pi(\mu,\sigma) {\rm d} \mu {\rm d} \sigma},
\]
\[
\hat{\sigma}^{\pi} (x^n) = \int \sigma \pi(\mu,\sigma \mid x^n) {\rm d} \mu {\rm d} \sigma = \frac{\int \sigma p(x^n \mid \mu,\sigma) \pi(\mu,\sigma) {\rm d} \mu {\rm d} \sigma}{\int p(x^n \mid \mu,\sigma) \pi(\mu,\sigma) {\rm d} \mu {\rm d} \sigma}.
\]
Its risk function is equal to the sum of the (unscaled) quadratic risk functions of $\hat{\mu}^{\pi}(x^n)$ and $\hat{\sigma}^{\pi}(x^n)$:
	\begin{align*}
R((\mu,\sigma),\hat{p}_{\pi}) = {\rm E}_{\mu,\sigma} \| \hat{\mu}^{\pi}(x^n)-\mu \|^2 + d \cdot {\rm E}_{\mu,\sigma} (\hat{\sigma}^{\pi}(x^n)-\sigma)^2. \label{risk_eq_locs}
\end{align*}
\end{theorem}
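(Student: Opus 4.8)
The plan is to mirror the argument used for the location case (Theorem \ref{th_loc}), replacing the single-parameter reduction by a joint reduction in $(\mu,\sigma)$. First I would invoke Proposition \ref{prop_locs_complete}: since every predictive density with finite mean and covariance is dominated, \emph{for every} $(\mu,\sigma)$, by a plug-in density $p(y \mid \hat{\mu}(x^n),\hat{\sigma}(x^n))$, its average risk against the prior $\pi$ is no smaller than that of the dominating plug-in density. Hence the Bayesian predictive density can be sought within the class of plug-in densities, and it suffices to minimize the average risk over pairs of estimators $(\hat{\mu}(x^n),\hat{\sigma}(x^n))$ with $\hat{\sigma}(x^n)>0$.

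Next I would use the risk identity recorded immediately before the theorem, namely that for a plug-in density the Wasserstein risk equals ${\rm E}_{\mu,\sigma}\| \hat{\mu}(x^n)-\mu \|^2 + d\,{\rm E}_{\mu,\sigma}(\hat{\sigma}(x^n)-\sigma)^2$. Integrating this against $\pi(\mu,\sigma)$ and applying Fubini to exchange the order of integration, the average risk splits into a term depending only on $\hat{\mu}$ and a term depending only on $\hat{\sigma}$, with no cross term. This additive, parameter-separated structure is the crux of the argument: it lets me optimize the two estimators independently, as two decoupled Bayes estimation problems under squared-error loss, even though the prior $\pi(\mu,\sigma)$ need not factor.

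For each subproblem the Bayes rule under squared-error loss is the posterior mean \citep{Lehmann}: minimizing the first term over $\hat{\mu}$ gives $\hat{\mu}^{\pi}(x^n) = \int \mu\,\pi(\mu,\sigma\mid x^n)\,{\rm d}\mu\,{\rm d}\sigma$, and minimizing the second term over $\hat{\sigma}$ gives $\hat{\sigma}^{\pi}(x^n)=\int \sigma\,\pi(\mu,\sigma\mid x^n)\,{\rm d}\mu\,{\rm d}\sigma$; rewriting these posterior expectations via Bayes' rule yields the ratio forms stated. Substituting the minimizers into the plug-in density produces the claimed $\hat{p}_{\pi}$, and the minimized average risk is exactly the asserted sum of the two posterior-mean quadratic risks.

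I expect the only point requiring care to be the combination of the reduction step with the separation step: one must confirm that restricting to plug-in densities loses no generality, which is guaranteed by the pointwise domination in Proposition \ref{prop_locs_complete}, and that $\hat{\sigma}^{\pi}(x^n)$ is a legitimate (strictly positive) scale, which holds because it is the posterior mean of the positive quantity $\sigma$. Everything else is routine, since the additive and variable-separated form of the risk collapses the problem into two independent squared-error Bayes problems whose solutions are the respective posterior means.
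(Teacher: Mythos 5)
Your proposal matches the paper's proof: it reduces to plug-in densities via Proposition \ref{prop_locs_complete}, uses the additive risk identity to decouple the $\mu$ and $\sigma$ problems, and identifies each Bayes solution as the corresponding posterior mean. The extra remarks on Fubini and on the positivity of $\hat{\sigma}^{\pi}(x^n)$ are sensible but do not change the argument.
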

\begin{proof}
	From Proposition \ref{prop_locs_complete}, it is sufficient to restrict our attention to the class of plug-in densities $\hat{p}(y \mid x^n) = p(y \mid \hat{\mu}(x^n), \hat{\sigma}(x^n))$.
	For this class, the risk function is obtained as
	\begin{align*}
	R((\mu,\sigma),\hat{p}) &= \int W_2(p(y \mid \mu,\sigma),{p}(y \mid \hat{\mu}(x^n), \hat{\sigma}(x^n)))^2 p(x^n \mid \mu,\sigma) {\rm d} x^n \\
	&= \int \| \hat{\mu}(x^n)-\mu \|^2 p(x^n \mid \mu,\sigma) {\rm d} x^n+d \int ( \hat{\sigma}(x^n)-\sigma )^2 p(x^n \mid \mu,\sigma) {\rm d} x^n,
	\end{align*}
	which is equal to the sum of the quadratic risks of $\hat{\mu}$ and $\hat{\sigma}$ as estimators of $\mu$ and $\sigma$, respectively.
	Therefore, the average risk is minimized by setting $\hat{\mu}$ and $\hat{\sigma}$ to the Bayes estimators $\hat{\mu}^{\pi}$ and $\hat{\sigma}^{\pi}$ under the unscaled quadratic loss, which are given by the posterior mean \citep{Lehmann}.
\end{proof}

\begin{corollary}\label{prop_locs2}
	If both of the Bayes estimators of $\mu$ and $\sigma$ with respect to a prior $\pi(\mu,\sigma)$ dominate the best equivariant estimator under the unscaled quadratic losses, then the Bayesian predictive density with respect to the prior $\pi(\mu,\sigma)$ dominates the best equivariant predictive density under the $L^2$ Wasserstein loss.
\end{corollary}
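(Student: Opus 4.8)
The plan is to obtain the conclusion directly from the additive risk decomposition established in Theorem \ref{th_locs}, together with the identification of the best equivariant predictive density in Theorem \ref{prop_locs}. First I would fix notation by writing $\hat{\mu}_0(x^n)$ and $\hat{\sigma}_0(x^n)$ for the best equivariant estimators of $\mu$ and $\sigma$ under the unscaled quadratic losses. By Theorem \ref{prop_locs} the best equivariant predictive density $\hat{p}_0$ is the plug-in density $p(y \mid \hat{\mu}_0(x^n),\hat{\sigma}_0(x^n))$, and since it is a plug-in density the general risk formula preceding Theorem \ref{prop_locs} applies, giving
\[
R((\mu,\sigma),\hat{p}_0) = {\rm E}_{\mu,\sigma} \| \hat{\mu}_0(x^n)-\mu \|^2 + d \cdot {\rm E}_{\mu,\sigma} (\hat{\sigma}_0(x^n)-\sigma)^2 .
\]
The Bayesian predictive density $\hat{p}_{\pi}$ is likewise a plug-in density, so Theorem \ref{th_locs} supplies the parallel expression with $\hat{\mu}^{\pi}$ and $\hat{\sigma}^{\pi}$ in place of $\hat{\mu}_0$ and $\hat{\sigma}_0$.

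Next I would subtract the two risks and observe that the difference splits along the two coordinates: it equals the difference of the location-quadratic risks plus $d$ times the difference of the scale-quadratic risks. The hypothesis provides precisely these two comparisons. Since $\hat{\mu}^{\pi}$ dominates $\hat{\mu}_0$ under the unscaled quadratic loss, the location term of $R((\mu,\sigma),\hat{p}_{\pi})$ is no larger than that of $R((\mu,\sigma),\hat{p}_0)$ for every $(\mu,\sigma)$; since $\hat{\sigma}^{\pi}$ dominates $\hat{\sigma}_0$, the same holds for the scale term. Adding the two inequalities yields $R((\mu,\sigma),\hat{p}_{\pi}) \leq R((\mu,\sigma),\hat{p}_0)$ for all $(\mu,\sigma)$, which is the weak-inequality half of domination.

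For the strict inequality required by the definition of domination, I would note that it is enough to exhibit a single parameter value where the total risk is strictly smaller. Because $\hat{\mu}^{\pi}$ dominates $\hat{\mu}_0$, there is some $(\mu,\sigma)$ at which the location-quadratic risk of $\hat{\mu}^{\pi}$ is strictly below that of $\hat{\mu}_0$; at that point the scale term is still $\leq$, so the summed risk is strictly smaller there. Hence $\hat{p}_{\pi}$ dominates $\hat{p}_0$ under the $L^2$ Wasserstein loss.

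I do not expect a genuine obstacle here, as the entire content is the coordinatewise additivity of the Wasserstein risk in the location and scale directions, already packaged in Theorem \ref{th_locs}. The only point that warrants a moment of care is the logic of the strict inequality: domination of the predictive density needs strictness at merely one parameter value, so it suffices that one of the two estimator dominations is strict somewhere while the other contributes a nonpositive difference everywhere. I would state this explicitly rather than mistakenly demanding simultaneous strictness of both the location and scale components.
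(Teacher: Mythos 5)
Your proposal is correct and matches the paper's (implicit) reasoning: the corollary is stated without a separate proof precisely because it follows immediately from the additive risk decomposition $R((\mu,\sigma),\hat{p}) = {\rm E}_{\mu,\sigma}\|\hat{\mu}(x^n)-\mu\|^2 + d\,{\rm E}_{\mu,\sigma}(\hat{\sigma}(x^n)-\sigma)^2$ established around Theorems \ref{prop_locs} and \ref{th_locs}, exactly as you argue. Your explicit handling of the strict-inequality part of domination (strictness at one parameter point for one component suffices) is a correct and welcome clarification of a detail the paper leaves tacit.
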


Finally, we provide a concrete example of the Bayesian predictive density that dominates the best equivariant one in the normal model with unknown variance.
Specifically, let $x_1,\ldots,x_n \sim {\rm N}_d (\mu,\sigma^2 I_d)$ independently and consider prediction of $y \sim {\rm N}_d (\mu,\sigma^2 I_d)$.
The sufficient statistics are 
\[
\bar{x} = \frac{1}{n} \sum_{i=1}^n x_i \sim {\rm N}_d \left( \mu, \frac{\sigma^2}{n} \right),
\]
and
\[
s = \sum_{i=1}^n (x_i - \bar{x})^2 \sim \sigma^2 \chi_{n-1}^2.
\]
From Theorem \ref{prop_locs}, the best equivariant predictive density is $\hat{p} (y \mid x^n)={\rm N}_d(\bar{x},c^2 {s} I_d)$ where $c$ is defined in \eqref{c_def}.
By combining Lemma \ref{lem_maru2} and Proposition \ref{prop_maru} with Corollary \ref{prop_locs2}, an improved Bayesian predictive density is obtained as follows.

\begin{theorem}\label{th_normal}
	If $(n-3)(p-4) \geq 8$, then the Bayes solution with respect to the following hierarchical prior dominates the best equivariant predictive density under the $L^2$ Wasserstein loss:
\[
\theta \mid \lambda,\tau \sim {\rm N}_d (0,\tau^{-1} \lambda^{-1} (1-\lambda) I_d),
\]
\[
\lambda \sim \lambda^{-1} 1_{[0,1]}, \quad \tau \sim 1_{(0,\infty)}.
\]
\end{theorem}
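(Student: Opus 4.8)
The plan is to push the whole problem through the risk decomposition already established for location-scale families and then to quote the two shrinkage results assembled in Section 2.

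First I would apply Theorem \ref{th_locs} together with Corollary \ref{prop_locs2}. By Theorem \ref{th_locs} the Bayesian predictive density with respect to the hierarchical prior is the plug-in density built from the posterior means $(\hat\mu^{\pi}(x^n),\hat\sigma^{\pi}(x^n))$, and its Wasserstein risk equals ${\rm E}_{\mu,\sigma}\|\hat\mu^{\pi}-\mu\|^2 + d\,{\rm E}_{\mu,\sigma}(\hat\sigma^{\pi}-\sigma)^2$. Consequently, by Corollary \ref{prop_locs2}, it is enough to show that, for the sufficient statistics $\bar x\sim {\rm N}_d(\mu,(\sigma^2/n)I_d)$ and $s\sim\sigma^2\chi^2_{n-1}$, the Bayes estimator $\hat\mu^{\pi}$ dominates the best equivariant estimator $\bar x$ under $\|\hat\mu-\mu\|^2$ and, at the same time, the Bayes estimator $\hat\sigma^{\pi}$ dominates the best equivariant estimator $c\sqrt s$ under $(\hat\sigma-\sigma)^2$. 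The crucial structural fact is that $\hat\mu^{\pi}$ and $\hat\sigma^{\pi}$ are posterior means with respect to one and the same prior, and that this prior is exactly the prior appearing in both Lemma \ref{lem_maru2} and Proposition \ref{prop_maru}; so the two coordinates can be handled separately and then recombined.

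For the location coordinate I would invoke Lemma \ref{lem_maru2} verbatim: it asserts that the generalized Bayes estimator of $\mu$ under this hierarchical prior dominates the best equivariant estimator under the unscaled quadratic loss. For the scale coordinate I would use Proposition \ref{prop_maru} to identify $\hat\sigma^{\pi}$ with $\phi_0(w)\sqrt s$, $w=\|\bar x\|^2/s$, and then apply Lemma \ref{lem_kubo} with the choice $\phi=\phi_0$. Two of the three hypotheses of Lemma \ref{lem_kubo} are then immediate — the requirement $\phi\geq\phi_0$ holds with equality, and $\phi_0(w)\to c$ as $w\to\infty$ follows from a Beta-function evaluation of the limiting ratio of the two $\lambda$-integrals defining $\phi_0$ — while the remaining hypothesis, that $\phi_0$ is non-decreasing, is the only genuine computation on this side and can be obtained from a monotone-likelihood-ratio (correlation) argument applied to those integrals. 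Lemma \ref{lem_kubo} then yields that $\hat\sigma^{\pi}=\phi_0(w)\sqrt s$ dominates $c\sqrt s$, which by Theorem \ref{prop_locs} is the scale part of the best equivariant predictive density.

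The step I expect to be the main obstacle is reconciling this reduced data model with the canonical model $x\sim {\rm N}_d(\mu,\sigma^2 I_d)$, $s\sim\sigma^2\chi^2_n$ under which Section 2 is phrased. Two adjustments are required. The degrees of freedom of $s$ are $n-1$ rather than $n$, so throughout Section 2 one replaces $n$ by $n-1$; this is what turns the condition $(n-2)(d-4)\geq 8$ of Lemma \ref{lem_maru2} into the stated $(n-3)(d-4)\geq 8$ and fixes the degrees of freedom used in $c$ and in $\phi_0$. More delicate is that $\bar x$ has variance $\sigma^2/n$ rather than $\sigma^2$; I would absorb this by the reparametrization $\nu=\sqrt n\,\mu$, $u=\sqrt n\,\bar x$, so that $u\sim {\rm N}_d(\nu,\sigma^2 I_d)$ is canonical and, since unscaled quadratic dominance is invariant under a common rescaling of estimand and estimator, dominance for $\nu$ transfers to dominance for $\mu$. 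The subtlety is that this same rescaling multiplies the variance of the location component of the prior by $n$, so that the induced prior on $\nu$ is the hierarchical prior with an extra scale factor; one must therefore check that the improvement results being quoted are driven by the hierarchical \emph{form} of the prior rather than its overall scale, so that they still apply after the rescaling and still deliver both dominating posterior means from the single prior of the statement. Once that is verified, Corollary \ref{prop_locs2} closes the argument.
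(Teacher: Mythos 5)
Your proposal is correct and follows essentially the same route as the paper, which itself gives no displayed proof but simply combines Lemma \ref{lem_maru2} (location part), Proposition \ref{prop_maru} together with Lemma \ref{lem_kubo} applied at $\phi=\phi_0$ (scale part), and Corollary \ref{prop_locs2}, after reducing to the sufficient statistics $\bar x$ and $s\sim\sigma^2\chi^2_{n-1}$ (whence $n\mapsto n-1$ and the condition $(n-3)(d-4)\geq 8$). The degrees-of-freedom and $\sqrt n$-rescaling subtleties you flag are genuine, but the paper passes over them silently as well, so your treatment is if anything more careful than the original.
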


\section{Conclusion}
In this study, we investigated predictive density estimation under the $L^2$ Wasserstein loss.
For both families, plug-in densities form a complete class \citep{Berger}.
For location families, the Bayesian predictive density is given by the plug-in density with the posterior mean of the location parameter. 
For location-scale families, the Bayesian predictive density is given by the plug-in density with the posterior mean of the location and scale parameters. 
We provided Bayesian predictive densities that dominate the best equivariant one in normal models.

We focused on location families and location-scale families in this study.
Extension to the case of an unknown covariance matrix is an interesting future problem.
Namely, we consider the model
\[
	p(x \mid \mu,\Sigma) = \frac{1}{(\det \Sigma)^{1/2}} f \left( \Sigma^{-1/2} (x-\mu) \right),
\]
which includes the elliptically contoured distributions \citep{Fang} as special cases.
In this case, the problem reduces to point estimation of the covairance matrix $\Sigma$ under the loss $L(\Sigma,\hat{\Sigma}) = W_2 (p(x \mid 0,\Sigma),p(x \mid 0,\hat{\Sigma}))^2 = {\rm tr} \left( \Sigma + \hat{\Sigma}-  2 (\Sigma^{1/2} \hat{\Sigma} \Sigma^{1/2})^{1/2} \right)$, which is different from Stein's loss or Frobenius loss.
In addition, while we considered $L^2$ Wasserstein loss in this study, 
it is an interesting future work to study predictive density estimation under other Wasserstein losses.

\section*{Acknowledgements}
William Strawderman's research is partially supported by a grant from the
Simons Foundation (\#418098).

\end{document}